\numberwithin{equation}{section}
\newtheorem{mainthm}{Theorem}
\theoremstyle{plain}
\newtheorem{thm}[equation]{Theorem}
\newtheorem{lem}[equation]{Lemma}
\newtheorem{prop}[equation]{Proposition}
\newtheorem{cor}[equation]{Corollary}
\theoremstyle{definition}
\newtheorem{defn}[equation]{Definition}
\newcommand{\bC}{\mathbb{C}}
\newcommand{\bN}{\mathbb{N}}
\newcommand{\bP}{\mathbb{P}}
\newcommand{\bQ}{\mathbb{Q}}
\newcommand{\bR}{\mathbb{R}}
\newcommand{\bS}{\mathbb{S}}
\newcommand{\bZ}{\mathbb{Z}}
\newcommand{\cR}{\mathcal{R}}
\newcommand{\cB}{\mathcal{B}}
\newcommand{\cL}{\mathcal{L}}
\newcommand{\cP}{\mathcal{P}}
\newcommand{\fg}{\mathfrak{g}}
\newcommand{\ft}{\mathfrak{t}}
\newcommand{\fL}{\mathfrak{L}}
\newcommand{\cp}{\mathbb{C}\mathbb{P}}
\newcommand{\ch}{\operatorname{ch}}
\newcommand{\ph}{\operatorname{ph}}
\newcommand{\cotanh}{\operatorname{coth}}
\newcommand{\dash}{^{\prime}}
\newcommand{\Diff}{\operatorname{Diff}}
\newcommand{\dual}{^{\vee}}
\newcommand{\ev}{\operatorname{ev}}
\newcommand{\im}{\operatorname{Im}}
\newcommand{\id}{\operatorname{id}}
\newcommand{\inc}{\operatorname{inc}}
\newcommand{\loopinf}{\Omega^{\infty}}
\newcommand{\map}{\operatorname{map}}
\newcommand{\MTSO}{\operatorname{MTSO}}
\newcommand{\proj}{\operatorname{pr}}
\newcommand{\PT}{\operatorname{PT}}
\newcommand{\sign}{\operatorname{sign}}
\newcommand{\suspinf}{\Sigma^{\infty}}
\newcommand{\thom}{\operatorname{th}}
\newcommand{\Pont}{\operatorname{Pont}}
\newcommand{\res}{\operatorname{res}}
\newcommand{\bTh}{\mathbb{T}\mathbf{h}}
\newcommand{\trf}{\operatorname{trf}}
\newcommand{\trg}{\operatorname{trg}}
\newcommand{\Sym}{\operatorname{Sym}}
\newcommand{\rank}{\operatorname{rank}}
\newcommand{\spann}{\operatorname{span}}
\newcommand{\Gr}{\operatorname{Gr}}
\newcommand{\hol}{\operatorname{hol}}
\address{Mathematisches Institut der Universit\"at Bonn,
Endenicher Allee 60, 53115 Bonn, Bundesrepublik Deutschland}
\email{ebert@math.uni-bonn.de}
\subjclass{}
\begin{document}
\vspace*{-1cm}
\title[MMM classes]{Algebraic independence of generalized MMM-classes}

\author{Johannes Ebert}

\begin{abstract}
The generalized Morita-Miller-Mumford classes of a smooth oriented
manifold bundle are defined as the image of the characteristic
classes of the vertical tangent bundle under the Gysin homomorphism.
We show that if the dimension of the manifold is even, then all
MMM-classes in rational cohomology are nonzero for some bundle. In
odd dimensions, this is also true with one exception: the MMM-class
associated with the Hirzebruch $\cL$-class is always zero. We also show a
similar result for holomorphic fibre
bundles.
\end{abstract}

\maketitle
\setcounter{tocdepth}{1}
\tableofcontents

\section{Introduction and statement of results}

Let $M$ be a closed oriented $n$-dimensional smooth manifold and let
$\Diff^+ (M)$ be the topological group of all orientation-preserving
diffeomorphisms of $M$, endowed with the Whitney
$C^{\infty}$-topology. A \emph{smooth oriented $M$-bundle} is a
fibre bundle with structural group $\Diff^+ (M)$ and fibre $M$. Let
$ Q \to B$ be a $\Diff^+ (M)$-principal bundle. The \emph{vertical tangent bundle} of the smooth oriented $M$-bundle $f:E :=Q \times_{\Diff^+ (M)} M \to B$
is the oriented $n$-dimensional vector bundle $T^f=T_v E := Q
\times_{\Diff^+ (M) } TM \to E$.
A \emph{smooth oriented closed fibre bundle of dimension $n$} is a
map $f: E \to B$ such that for any component $C \subset B$, $f:
f^{-1} (C) \to C$ is a smooth oriented $M$-bundle for some closed
oriented $n$-manifold $M$. We sometimes abbreviate this term to
\emph{smooth fibre bundle}, because all manifold bundles we consider
are oriented and have closed fibres.

If $f: E \to B$ is a smooth oriented fibre bundle, then the
\emph{Gysin homomorphism} (or umkehr homomorphism) $f_{!}: H^{*}(E)
\to H^{*-n} (B)$ is defined (all cohomology groups in this paper
have rational coefficients, unless we explicitly state the
contrary). Define a linear map

\begin{equation}\label{kappaofe}
\kappa_{E}: H^* (BSO(n);\bQ) \to H^{*- n} (B ; \bQ)
\end{equation}

by

\begin{equation}\label{kappaofc}
\kappa_E (c) :=f_{!} (c(T_v E)) \in H^{k-n} (B); \; c \in H^{*}
(BSO(n);\bQ).
\end{equation}

The universal $M$-bundle $E_M \to B \Diff^+ (M)$ gives a map

\[
\kappa_{E_M}: H^* (BSO(n);\bQ) \to H^{*- n} (B \Diff^+ (M); \bQ).
\]

The homomorphism $\kappa_E$ is natural in the sense that $h^* \circ
\kappa_E = \kappa_{h^* E}$ for any map $h$ and so the images of $\kappa_E$ can be viewed as
characteristic classes of manifold bundles, which we call \emph{generalized Morita-Miller-Mumford
classes} or short MMM-classes. Morita \cite{Mor}, Miller \cite{Mil} and Mumford
\cite{Mum} first studied these classes in the $2$-dimensional case.

For a graded vector space $V$ and $n \in \bN$, we denote by
$\sigma^{-n} V$ the new graded vector space with $(\sigma^{-n}V)_m =
0 $ if $m \leq 0$ and $(\sigma^{-n} V)_m = V_{m+n}$ for $m
> 0$.
Then $\kappa_{E}$ becomes a map $\sigma^{-n} H^* (BSO(n); \bQ) \to
H^* (B ; \bQ)$ of graded vector spaces.

Let $\cR_n$ be a set of representatives for the oriented
diffeomorphism classes of oriented closed $n$-manifolds (connected
or non-connected) and let $\cR_{n}^{0} \subset \cR_n$ be the set of
connected $n$-manifolds. Put

\[\cB_n := \coprod_{M \in \cR_n} B \Diff^+ (M); \; \cB_{n}^{0} = \coprod_{M \in \cR_{n}^{0}} B
\Diff^+ (M)\subset \cB_n.\]

There are tautological smooth fibre bundles on these spaces and
therefore we get maps of graded vector spaces

\begin{equation}\label{kappauniversal1}
 \kappa^n: \sigma^{-n} H^* (BSO(n); \bQ) \to H^* (\cB_{n};
\bQ); \;  \kappa^{n,0}: \sigma^{-n} H^* (BSO(n); \bQ) \to H^*
(\cB_{n}^{0}; \bQ);
\end{equation}

$\kappa^{n,0}$ is the composition of $\kappa^n$ with the restriction
map $H^*(\cB_{n}) \to H^*(\cB_{n}^{0})$. Here is our first main
result.

\begin{mainthm}\label{maintheorem1}

\begin{enumerate}
\item For even $n$, $\kappa^{n,0} :\sigma^{-n} H^* (BSO(n); \bQ) \to H^* (\cB_{n}^{0};
\bQ)$ is injective.
\item For odd $n $, the kernel of $\kappa^{n,0}:\sigma^{-n} H^* (BSO(n); \bQ) \to H^* (\cB_{n}^{0};
\bQ)$ is the linear subspace that is generated by the components
$\cL_{4d} \in H^{4d} (BSO(n); \bQ)$ of the Hirzebruch $\cL$-class
(for $4d > n$).
\end{enumerate}
\end{mainthm}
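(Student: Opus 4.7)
The plan is to split Theorem A into two pieces. First, establish $\kappa^{n,0}(\cL_{4d})=0$ for odd $n$ and $4d>n$, which accounts for one containment in the kernel description of (2). Second, construct enough explicit smooth oriented bundles to detect every class outside the prescribed kernel, covering (1) and the reverse containment.

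\emph{Vanishing for odd $n$.}  The key observation is that for any smooth oriented $n$-manifold bundle $f:E\to B$ with $n$ odd and $B$ a closed oriented manifold, the signature $\sigma(E)$ vanishes. Indeed, the intersection form on $H^{(\dim E)/2}(E;\bR)$ decomposes via the Leray--Serre spectral sequence into blocks pairing $H^{p}(B;H^{q}(M))$ with $H^{\dim B-p}(B;H^{n-q}(M))$, and since $n$ is odd one always has $q\neq n-q$, so every block is hyperbolic with zero signature. Combining $\sigma(E)=\int_{E}\cL(TE)$ with the splitting $TE=T_{v}E\oplus f^{*}TB$ and the projection formula gives, whenever $\dim B=4d-n$, the identity
\[
0\;=\;\int_{B}\kappa_{E}(\cL_{d})+\sum_{n/4<d'<d}\int_{B}\kappa_{E}(\cL_{d'})\cdot\cL_{d-d'}(TB),
\]
where terms with $4d'\le n$ drop out for degree reasons. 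An induction on $d$, starting from the smallest $d$ with $4d>n$, then shows $\kappa_{E}(\cL_{d})$ pairs to zero against every rational homology class of $\cB_{n}^{0}$, hence vanishes.

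\emph{Detection via projective and sphere bundles.}  For the reverse containment and for part (1), the strategy is to produce enough test bundles that $\kappa^{n,0}$ is explicitly computable and manifestly injective on the complement of the prescribed kernel. For a complex vector bundle $V\to X$ of rank $k+1$, the associated $\cp^{k}$-bundle $\pi:\bP(V)\to X$ has vertical tangent bundle satisfying
\[
T_{v}\bP(V)\oplus\underline{\bC}\;\cong\;\pi^{*}V\otimes\mathcal{O}_{V}(1),
\]
so its Pontryagin and Euler classes are explicit polynomials in $c_{*}(V)$ and the tautological hyperplane class $h=c_{1}(\mathcal{O}_{V}(1))$. An analogous formula $T_{v}S(W)\oplus\underline{\bR}\cong\pi^{*}W$ governs sphere bundles of an oriented real $W\to X$. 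Taking products and iterations of these constructions over bases of the form $B=\prod_{i}BU(r_{i})\times\prod_{j}BSO(m_{j})$ --- with trivial $S^{1}$-factors in the fibre for odd $n$ when needed for parity --- yields, for each $n$, a family of smooth oriented $M$-bundles on which the MMM-classes $\kappa_{E}(c)$ are computable polynomials in universal Chern classes via the Leray--Hirsch / Segre formula for $\pi_{!}$ on projective bundles and the Gysin formula for sphere bundles.

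\emph{Linear independence and main obstacle.}  Injectivity now reduces to a polynomial-algebra statement: the joint map $c\mapsto(\kappa_{E_{\lambda}}(c))_{\lambda}$ from $\sigma^{-n}H^{*}(BSO(n);\bQ)$ to the product of rational cohomologies of the chosen bases has the claimed kernel (zero for even $n$, spanned by $\{\cL_{4d}:4d>n\}$ for odd $n$). I expect this to be the principal technical obstacle. For even $n$, one must detect every Pontryagin monomial together with the Euler class, and the Chern-root presentation of projective bundles should give enough algebraic independence via a Jacobian / transcendence-degree argument. For odd $n$, combined with the vanishing step, one must verify that no further relations arise --- i.e., that modulo the $\cL_{4d}$'s the joint polynomial map is still injective. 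Making the right choice of test bundles and carrying out this Jacobian verification is the heart of the proof.
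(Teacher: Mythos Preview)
Your vanishing argument for odd $n$ is essentially the known result the paper cites (Meyer, L\"uck--Ranicki, and the author). The genuine gap is in the detection step, and it is not merely the technical Jacobian verification you flag.

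Every bundle in your test family has a connected compact Lie group as structure group (a product of $U(r_i)$'s and $SO(m_j)$'s). For any such bundle $f:E\to B$, the family index theorem identifies $f_!(\cL(T_vE))$ with the Chern character of the index bundle of the fibrewise signature operator; since a connected group acts trivially on fibre cohomology, this index bundle is trivial, and hence $\kappa_E(\cL_{4d})=0$ for every $4d>n$. For even $n$ the classes $\cL_{4d}$ are \emph{not} supposed to lie in the kernel, so your family is provably too small to prove part (1). Concretely, for $n=2$ your only available fibre is $\cp^1=\bS^2$, and for the universal bundle $f:BSO(2)\to BSO(3)$ one has $f^*p_4=\chi^2$, so
\[
\kappa_E(\chi^{2k})=f_!\bigl((f^*p_4)^k\bigr)=p_4^{\,k}\,f_!(1)=0,
\]
while $\kappa_E(\chi^{2k+1})=2p_4^{\,k}$; no even power of $\chi$ is ever seen. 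Your ``trivial $\bS^1$-factors in the fibre'' for odd $n$ contribute nothing either: if $E'=E\times\bS^1\to B$ with the circle fibred over a point, then $c(T_vE')$ is pulled back from $E$ and the extra integration over $\bS^1$ annihilates it, so $\kappa_{E'}\equiv 0$.

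The paper supplies two ingredients from outside the compact-Lie world. The base case $n=2$ is the Miller--Morita theorem, which genuinely requires surface bundles with structure group the full diffeomorphism group (the mapping class group). An induction via products of lower-dimensional bundles then cuts the residual kernel down to the span of the Pontrjagin-character classes $\ph_{4d}$ (together with $\cL_{4d}$ for odd $n$). The $\ph_{4d}$ are detected on $\cp^{2k}$-bundles over $BSU(2k+1)$ combined with a \emph{loop space construction}: from an $M$-bundle over $X$ one builds an $\bS^1\times M$-bundle over the free loop space $LX$, and the transgression $\trg:H^*(X)\to H^{*-1}(LX)$ intertwines the two families of MMM-classes. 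Injectivity of $\trg$ when $X$ is simply connected with free rational cohomology is precisely what makes this dimension shift nontrivial, and is what the trivial-$\bS^1$ product lacks.
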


Equivalently, Theorem \ref{maintheorem1} says (for even $n$) that
for each $0 \neq c \in \sigma^{-n} H^* (BSO(n);\bQ)$, there is a
connected $n$-manifold $M$ and a smooth oriented $M$-bundle $f: E
\to B$ such that $\kappa_{E} (c) \neq 0 \in H^{*} (B)$. Similarly
for odd $n$.

Generalized MMM-classes of degree $0$ are also interesting: these
are just the characteristic numbers of the fibre. The linear
independence of those is a well-known classical result by Thom
\cite{thom} and therefore we only care about positive degrees.

For an arbitrary graded $\bQ$-vector space $V$ (concentrated in
positive degrees), we let $\Lambda V$ be the free graded-commutative
unital $\bQ$-algebra generated by $V$. If $A$ is a
graded-commutative $\bQ$-algebra, then any graded vector space
homomorphism $\phi: V \to A$ extends uniquely to an homomorphism
$\Lambda \phi: \Lambda V \to A$ of graded algebras such that
$\Lambda \phi \circ s = \phi$ where $s: V \to \Lambda V$ is the
natural inclusion. Therefore, the map $\kappa^n$ from
\ref{kappauniversal1} induces a homomorphism

\begin{equation}\label{kappauniversal2}
\Lambda \kappa^n : \Lambda \sigma^{-n} H^* (BSO(n); \bQ) \to H^*
(\cB_n; \bQ).
\end{equation}

Our second main result is about $\Lambda \kappa^n$.

\begin{mainthm}\label{maintheorem2}
\begin{enumerate}
\item If $n$ is even, then the map $\Lambda \kappa^n $ from \ref{kappauniversal2} is injective.
\item If $n $ is odd, then the kernel of $\Lambda \kappa^n$ is the ideal generated
by the components $\cL_{4d} \in H^{4d} (BSO(n); \bQ)$ of the
Hirzebruch $\cL$-class (for $4d > n$).
\end{enumerate}
\end{mainthm}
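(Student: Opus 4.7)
The plan is to deduce Theorem \ref{maintheorem2} from Theorem \ref{maintheorem1} by exploiting the commutative-monoid structure on $\cB_n$ given by disjoint union of manifolds. For any finite tuple $M_1,\dots,M_N$ of connected closed oriented $n$-manifolds, fiberwise disjoint union of the universal bundles defines a classifying map
\[
\mu = \mu_{M_1,\dots,M_N} : B\Diff^+(M_1) \times \cdots \times B\Diff^+(M_N) \longrightarrow \cB_n,
\]
and because the Gysin homomorphism is additive under disjoint union of bundles, one obtains the primitivity relation
\[
\mu^*\kappa^n(c) = \sum_{j=1}^{N} \pi_j^* \kappa_{E_{M_j}}(c) \in \bigotimes_{j=1}^{N} H^*(B\Diff^+(M_j);\bQ),
\]
where $\pi_j$ is the $j$-th projection. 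By the universal property of $\Lambda$, the composition $\mu^*\circ\Lambda\kappa^n$ is the unique algebra homomorphism extending this formula.

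In the odd case, one first verifies that $\cL_{4d} \in \ker\kappa^n$ on every component of $\cB_n$, not only on $\cB_n^0$. Every component has the form $B\Diff^+(\sqcup_i M_i^{\sqcup k_i})$ for connected, mutually non-diffeomorphic $M_i$, and admits a finite cover $\prod_i (B\Diff^+(M_i))^{k_i} \to B\Diff^+(\sqcup_i M_i^{\sqcup k_i})$ trivialising the permutation action; over the cover the universal bundle splits as a disjoint union of pulled-back connected-fiber bundles. Additivity of the Gysin map and Theorem \ref{maintheorem1} give the vanishing on the cover, whence $\kappa^n(\cL_{4d}) = 0$ on the base, since rational cohomology injects under finite covers. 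Thus $(\cL_{4d}) \subset \ker\Lambda\kappa^n$, and $\Lambda\kappa^n$ descends to an algebra map $\bar\kappa : \Lambda W \to H^*(\cB_n;\bQ)$ with $W = V := \sigma^{-n}H^*(BSO(n);\bQ)$ in the even case and $W = V/\langle \cL_{4d}\rangle$ in the odd case; by Theorem \ref{maintheorem1}, $\kappa^{n,0}$ restricts to an injection $W \hookrightarrow H^*(\cB_n^0;\bQ)$.

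It remains to show $\bar\kappa$ is injective. Let $0 \neq p \in \Lambda W$ be homogeneous and let $d$ be the maximal word length of a monomial occurring in $p$. The composition $\mu_{M_1,\dots,M_d}^* \circ \bar\kappa$ factors as
\[
\Lambda W \xrightarrow{\Delta_d} (\Lambda W)^{\otimes d} \xrightarrow{(\kappa^{n,0})^{\otimes d}} \bigotimes_{j=1}^{d} H^*(B\Diff^+(M_j);\bQ),
\]
where $\Delta_d$ is the $d$-fold iterated coproduct in the primitively generated Hopf algebra $\Lambda W$. Projecting $\Delta_d(p)$ onto the multilinear summand $W^{\otimes d} \subset (\Lambda W)^{\otimes d}$ annihilates monomials of word length $<d$ and converts the length-$d$ component $p_d$ of $p$ into a nonzero rational multiple of its classical polarization; as polarization in characteristic zero is an isomorphism from the length-$d$ part of $\Lambda W$ onto the graded-symmetric part of $W^{\otimes d}$, this polarization is nonzero. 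Finally, choosing $M_1,\dots,M_d$ so that each basis element of $W$ appearing in $p_d$ is detected on $B\Diff^+(M_j)$ for enough values of $j$---possible by Theorem \ref{maintheorem1}---a K\"unneth argument forces the image of the polarization in $\bigotimes_j H^*(B\Diff^+(M_j);\bQ)$ to be nonzero, hence $\bar\kappa(p) \neq 0$.

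The main subtleties are the polarization identity in the graded-commutative Hopf algebra $\Lambda W$ (which requires careful sign bookkeeping when odd-degree generators are present) and the matching argument needed to choose the $M_j$ so that the nonzero polarization element in $W^{\otimes d}$ survives restriction through $(\kappa^{n,0})^{\otimes d}$. Both are routine once set up correctly, and together with the injectivity supplied by Theorem \ref{maintheorem1} they yield the conclusion.
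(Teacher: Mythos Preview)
Your approach is sound and genuinely more elementary than the paper's. Both reduce Theorem \ref{maintheorem2} to Theorem \ref{maintheorem1}, but for the step from linear to algebraic independence the paper (Section \ref{lineartoalgebraicsection}) first packages finitely many connected $M_i$ into one disconnected $M$ over a connected base $B$ with $\kappa_E|_W$ injective, then builds the $\underline{m}\times M$-bundle over $E(\Sigma_m;B^m)$ and invokes the Barratt--Priddy--Quillen theorem together with Nakaoka stability to identify $H^*(E(\Sigma_m;B^m);\bQ)$ with $\Lambda\tilde H^*(B)$ in a range. Your coproduct/polarisation argument bypasses this machinery entirely: it is the direct Hopf-algebraic statement that primitives in $\Lambda W$ are detected by the multilinear part of the iterated coproduct. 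The paper's route buys a cleaner ``single bundle detects everything in a degree range'' statement; yours buys the elimination of a nontrivial external input.

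Two points in your sketch should be made explicit. First, the second arrow in your displayed factorisation must be $\bigotimes_j \Lambda\kappa_{E_{M_j}}$ (the algebra extensions), not $(\kappa^{n,0})^{\otimes d}$. Second, and more importantly, the word ``hence'' in your last sentence hides a step: $\mu^*\bar\kappa(p)$ is the image of all of $\Delta_d(p)$, not only of its multilinear component, so you must explain why the non-multilinear terms cannot cancel it. The missing observation is that $\Lambda\kappa_{E_{M_j}}$ sends $\Lambda^0 W=\bQ$ into $H^0$ and $\Lambda^{\geq 1}W$ into $H^{>0}$; therefore, in the K\"unneth multi-grading of $\bigotimes_j H^*(B\Diff^+(M_j))$, the multilinear part of $\Delta_d(p)$ lands in $\bigotimes_j H^{>0}$ while every other summand (having at least one tensor factor equal to $1$, since the total word length is at most $d$) lands in a complementary component. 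With this said, choosing the $M_j$ is immediate: pick finitely many connected $N_1,\dots,N_r$ with $W\hookrightarrow\bigoplus_i H^*(B\Diff^+(N_i))$ in the relevant degrees, tensor, and project to a nonzero summand---no delicate matching is needed. (For the containment $(\cL_{4d})\subset\ker\Lambda\kappa^n$ in the odd case the paper simply cites Theorem \ref{vanishing}; your finite-cover argument is a valid alternative.)
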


We show a similar result in the complex case. A holomorphic fibre
bundle of dimension $m$ is a proper holomorphic submersion $f:E \to
B$ between complex manifolds of codimension $-m$. By Ehresmann's
fibration theorem, $f$ is a smooth oriented fibre bundle (but the
biholomorphic equivalence class of the fibres is not locally
constant). The vertical tangent bundle $T_v E := \ker Tf$ is a
complex vector bundle of rank $n$ and for any $c \in H^* (BU(m))$,
we can define

\[
\kappa_{E}^{\bC} (c):= f_{!} (c(T_v E)) \in H^{*-2m}(B).
\]

\begin{mainthm}\label{holomorphiccase}
\begin{enumerate}
\item For each $ 0 \neq c \in \sigma^{-2m}H^{*} (BU(m))$, there exists
a holomorphic fibre bundle $f:E \to B$ of dimension $m$ on a
projective variety $B$ such that $f_{!} (c(T_v E)) \neq 0$.
\item For any $0 \neq c \in \Lambda \sigma^{-2m}H^{*} (BU(m))$, there
exists a holomorphic fibre bundle with $m$-dimensional fibres on an
open complex manifold such that $\Lambda \kappa_{E}^{\bC} (c) \neq
0$.
\end{enumerate}
\end{mainthm}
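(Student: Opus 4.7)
\textbf{Proof plan for Theorem \ref{holomorphiccase}.}

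My plan is to detect each given class on a concrete algebraic fibre bundle, namely a projective bundle $f : E = \bP(V) \to B$ of a holomorphic vector bundle $V$ of rank $m+1$ over a smooth projective variety $B$. The starting point is the relative Euler sequence
\[
0 \longrightarrow \mathcal{O}_E \longrightarrow f^*V \otimes \mathcal{O}_E(1) \longrightarrow T_v E \longrightarrow 0,
\]
which writes $c(T_vE) = c(f^*V\otimes \mathcal{O}_E(1))$ as an explicit polynomial in $\zeta := c_1(\mathcal{O}_E(1))$ and the Chern classes of $V$, so that $\kappa_E^{\bC}(c) = f_!(c(T_vE))$ is computable via the projection formula together with the standard identity $f_!(\zeta^{m+k}) = h_k(\text{Chern roots of }V)$ for $k \geq 0$ (and $f_!(\zeta^j) = 0$ for $j < m$).

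For part (1), I would take $B = (\cp^N)^{m+1}$ with $N$ large, $L_i := \proj_i^*\mathcal{O}(1)$, $V := \bigoplus_{i=0}^m L_i$, and $a_i := c_1(L_i)$. Then $c(T_vE) = \prod_{i=0}^m(1 + a_i + \zeta)$ with $\zeta$ satisfying $\prod_{i=0}^m(\zeta + a_i) = 0$, so $\kappa_E^{\bC}$ becomes an explicit $\bQ$-linear map
\[
H^*(BU(m)) = \bQ[c_1,\dots,c_m] \longrightarrow \bQ[a_0,\dots,a_m]^{S_{m+1}}/(a_i^{N+1}).
\]
Passing to the limit $N \to \infty$ removes the truncations, and I would prove injectivity on positive degrees by a leading-term argument: each monomial $c_1^{\alpha_1}\cdots c_m^{\alpha_m}$ maps to a symmetric polynomial whose top-degree monomial (in some fixed monomial ordering on $\bQ[a_0,\dots,a_m]$) distinguishes it from the images of all other monomials of the same total degree, so that no nontrivial linear combination can map to zero.

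For part (2), given a monomial $c^{(1)}\cdots c^{(r)} \in \Lambda\sigma^{-2m}H^*(BU(m))$, I would apply part (1) to obtain for each $j$ a smooth projective variety $B_j$ and a holomorphic bundle $E_j \to B_j$ detecting $c^{(j)}$. On $B := \prod_j B_j$, form the disjoint union $E := \bigsqcup_j \proj_j^* E_j \to B$, a holomorphic fibre bundle with disconnected $m$-dimensional fibre. Then $\kappa_E^{\bC}(c) = \sum_j \proj_j^* \kappa_{E_j}^{\bC}(c)$, and the product $\prod_{i=1}^r \kappa_E^{\bC}(c^{(i)})$ expands via the K\"unneth isomorphism $H^*(B) \cong \bigotimes_j H^*(B_j)$ into a sum indexed by maps $\sigma:\{1,\dots,r\} \to \{1,\dots,r\}$. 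By choosing the $B_j$ and the $N_j$ so that $\kappa_{E_j}^{\bC}(c^{(j)})$ sits in a distinguished K\"unneth weight (one bookkeeping option: fix strictly increasing dimensions so that the multi-degrees of different summands cannot accidentally coincide), the $\sigma = \id$ term $\bigotimes_j \kappa_{E_j}^{\bC}(c^{(j)})$ is nonzero and is not cancelled by any other $\sigma$. A general (not necessarily monomial) element of the free algebra is then handled by expanding in a homogeneous basis and applying the monomial case summand by summand.

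The main obstacle is the injectivity statement in part (1): proving that the concrete $\bQ$-linear map from $\bQ[c_1,\dots,c_m]$ to symmetric polynomials in $m+1$ variables induced by Gysin integration on the $\cp^m$-bundle $\bP(V) \to B$ is injective on elements of positive degree. This is a symmetric-function computation in which the specific shape of the Euler sequence and the values $f_!(\zeta^{m+k}) = h_k(a_0,\dots,a_m)$ must be exploited; the leading-monomial approach sketched above is my preferred route, but it is the place where the geometry has to do genuine work. Once part (1) is secured, the K\"unneth bookkeeping for part (2) is essentially formal.
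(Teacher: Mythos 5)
There is a genuine and fatal gap in your plan for Part (1). You propose to detect every nonzero $c \in \sigma^{-2m}H^*(BU(m))$ by Gysin integration on a single projectivized vector bundle $f : \bP(V) \to B$. But for \emph{any} such bundle (no matter how $V$ or $B$ are chosen), the Grothendieck--Riemann--Roch theorem forces the component $\operatorname{td}_{2d}$ of the Todd class to lie in the kernel of $\kappa_E^{\bC}$ for every $d$ with $2d > 2m$. Indeed, since $R^0 f_* \mathcal{O}_{\bP(V)} = \mathcal{O}_B$ and $R^i f_* \mathcal{O}_{\bP(V)} = 0$ for $i>0$, one has $f_!\bigl(\operatorname{td}(T_v E)\bigr) = \ch\bigl(f_![\mathcal{O}_{\bP(V)}]\bigr) = \ch(\mathcal{O}_B) = 1$, so all positive-degree components of $f_!\bigl(\operatorname{td}(T_v E)\bigr)$ vanish. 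The classes $\operatorname{td}_{2d}$ (for instance, $\operatorname{td}_{2d} = \tfrac{B_d}{d!}c_1^d$ when $m=1$ and $d$ is even) are nonzero elements of $\sigma^{-2m}H^*(BU(m))$ in positive degree, so the $\bQ$-linear map $\kappa_E^{\bC}$ you are trying to prove injective is definitely not injective, regardless of how cleverly you choose $V$ and the leading-monomial ordering. No leading-term argument can repair this: the Todd class is a specific nonzero class that maps to zero.

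The paper avoids this obstruction precisely by \emph{not} trying to detect everything on one bundle. It first runs the product-bundle induction (the holomorphic analogue of Proposition~\ref{inductionstep}), which detects every class in positive degree except the Chern-character components $\ch_{2d}$ for $2d \ge 2m$. (The Todd class components are \emph{not} in the intersection $\bigcap\ker r_{m_1,m_2}$, so they are handled by the induction on products of lower-dimensional bundles, not by a single $\cp^m$-bundle.) The exceptional classes $\ch_{2d}$ are then detected on two concrete bundles: the universal $\cp^m$-bundle over $BSU(m+1)$ approximated by Grassmannians (for $d-m$ even), and a twisted projective bundle $\bP\bigl((\bC\oplus T)\boxtimes L \oplus \bC^{m-2}\bigr) \to \cp^1\times\cp^r \to \cp^r$ (for $d-m$ odd), with the direct computation carried out via Proposition~\ref{twoplustrivial}. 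Your Part (2) plan (products, disjoint-union fibres, K\"unneth bookkeeping) is structurally in the right spirit --- the paper also passes to disconnected fibres and uses a Barratt--Priddy--Quillen-type argument --- but it is moot here because Part (1) as you have set it up cannot be completed.

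As a constructive note: the paper's Proposition~\ref{linkernel} proves the exactly analogous vanishing (for the signature class instead of Todd) and even lists the family index theorem as one of three proofs. That vanishing is a central feature of the problem, not an artifact of some particular bundle choice, and any approach to Theorem~\ref{holomorphiccase} must route around it rather than through it.
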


Note that it is far from obvious to say what the universal
holomorphic bundle should be. Therefore we do not formulate Theorem
\ref{holomorphiccase} in the language of universal bundles.

The results of this paper can be interpreted in the language of the
Madsen-Tillmann-Weiss spectra $\MTSO(n)$ \cite{GMTW}, as we will
briefly explain. By definition, $\MTSO(n)$ is the Thom spectrum of
the inverse of the universal vector bundle $L_n \to BSO(n)$. If $f:E
\to B$ is an oriented manifold bundle of fibre dimension $n$, then
the Pontrjagin-Thom construction yields a spectrum map
$\alpha^{\flat}: \suspinf B_+ \to \MTSO(n)$. The spectrum cohomology
of $\MTSO(n)$ is, by the Thom isomorphism, isomorphic to $H^{*+n}
(BSO(n))$. Therefore, $\alpha^{\flat}$ induces a map of graded
vector spaces $\sigma^{-n} H^{*} (BSO(n)) \to H^* (B)$, which is the
same as the map $\kappa_E$.

The adjoint of $\alpha^{\flat}$ is a map $\alpha:B \to \loopinf_0
\MTSO (n)$ and it induces an algebra map $H^{*} (\loopinf_0
\MTSO(n)) \to H^* (B)$. Under the classical isomorphism $H^*
(\loopinf_0 \MTSO(n);\bQ) \cong \Lambda H^{*>0} (\MTSO(n);\bQ)$,
this map corresponds to $\Lambda$. Apart from the breakthrough works
\cite{MT}, \cite{MW}, \cite{GMTW}, the characteristic classes of
manifold bundles related to $\MTSO(n)$ have been studied by several
authors \cite{GRW}, \cite{Sad}. Their methods, however, do not
suffice to show Theorems \ref{maintheorem1} and \ref{maintheorem2}.

In general, the construction of manifold bundles and the computation
of generalized MMM-classes are rather difficult problems. The only
difficult constructions which we need in the present paper are in
the $2$-dimensional case, and for that we rely entirely on
\cite{Mil} and \cite{Mor}. There are some other computations of
MMM-classes which we want to mention though we do not need them.

The MMM-classes of bundles with compact connected Lie groups as
structural groups are relatively easy to compute due to the
''localization formula'' of \cite{AB}. A special case is the case of
homogeneous space bundles of the form $BH \to BG$ where $H \subset
G$ are compact Lie groups. In that case, the MMM-classes can be
expressed entirely in terms of Lie-theoretic data. In \cite{AKU}, a
similar localization principle is applied to cyclic structural
groups.

The MMM-classes associated with \emph{multiplicative sequences} are
rather well understood because of the close relationship with genera
(i.e., ring homomorphisms from the oriented bordism ring to $\bQ$),
see e.g. \cite{HBJ}. The theory of elliptic genera shows that many
of these MMM-classes are nontrivial. Unfortunately, this is not
enough to establish Theorem \ref{maintheorem1}.

Another source of smooth fibre bundles with nontrivial MMM-classes
is the following result. If $M$ is an oriented manifold with
signature $0$, then there exists an oriented smooth fibre bundle $E
\to \bS^1$ such that $E$ is oriented cobordant to $M$. This was
established by Burdick and Conner (combine Corollary 6.3 of
\cite{Con} with Theorem 1.2 of \cite{Burd}) away from the prime $2$.
Another proof was given by W. Neumann \cite{Neum} based on a result
of J\"anich \cite{Jaen}. Let $0 \neq x \in H^{4k}(BSO(4k))$ be a
class that is not a multiple of the Hirzebruch class. Then there is
a $4k$-manifold $M$ with signature $0$ and $\langle x (TM);[M]
\rangle \neq 0$ and a fibre bundle $f:M \to \bS^1$ by the above
results. Then $f_{!} (x(T_v M)) \neq 0 \in H^1 (\bS^1)$. Therefore,
in all dimensions of the form $4k-1$, the statement of Theorem
\ref{maintheorem3} is true for classes of degree $1$.

In section \ref{outlinesection}, we give a detailed overview of the
proof of the main results. In the appendix, we recapitulate the
definitions and the relevant properties of the Gysin-homomorphism
and the related transfer. The rest of the paper contains the details
of the proof outlined in section \ref{outlinesection}.

\subsection{Acknowledgement}

The author wants to acknowledge the hospitality and generosity of
the Mathematics Department of the University of Copenhagen, which is
where this project was begun.

\subsection{Notations and conventions}

All cohomology groups in this paper have rational coefficients. When
$G$ is a topological group which acts on the space $X$, we denote
the Borel construction by $E(G;X):= EG \times_G X$. We furthermore
abbreviate $\cB_n := \coprod_{M \in \cR_n} B \Diff^+ (M)$. Our
notation of standard characteristic classes differs from the
customary one. We give them the actual cohomological degree they
have as an index. For an example, $p_4 (V) $ will denote what is
commonly known as the first Pontrjagin class of the real vector
bundle. We hope that this does not lead to confusion. If $x$ is an
element of a graded vector space, we denote its degree by $|x|$,
implicitly assuming that $x$ is homogeneous. Moreover, all sub
vector spaces $ W \subset V$ of a graded vector space are assumed to
be graded, in other words $W= \oplus_n W \cap V_n$. The dual space
of a vector space $V$ is always denoted by $V \dual$.

\section{Outline of the proof}\label{outlinesection}

The proof of Theorems \ref{maintheorem1}, \ref{maintheorem2} and
\ref{holomorphiccase} is an eclectic combination of several
computations. In this section, we give an outline. The cohomology of
$BSO(n)$ is well known:

\[
H^* (BSO(2m+1); \bQ) \cong \bQ [p_4, \ldots , p_{4m}] ; \;  H^*
(BSO(2m); \bQ) \cong \bQ [p_4,  \ldots , p_{4m}, \chi] /(\chi^2 -
p_{4m}).
\]

The cases $n =0,1$ of Theorem \ref{maintheorem1} are empty. The fact
that the subspace generated by the components of the Hirzebruch
$\cL$-class is contained in the kernel of $\kappa^n$ for odd $n$
follows from the multiplicativity of the signature in fibre bundles
of odd-dimension: If $f:E \to B$ is a smooth oriented fibre bundle
with odd-dimensional fibres and $B$ is a closed oriented manifold,
then $\sign (E) =0$. This was first mentioned by Atiyah \cite{Atfib}
(without proof), proven later by Meyer \cite{Mey}, L\"uck-Ranicki
\cite{LR} and the author \cite{Eb}. For further reference, we state
this result explicitly.

\begin{thm}\label{vanishing}
For odd $n$, the kernel of $\kappa^n$ contains the subspace that is
generated by the components $\cL_{4d} \in H^{4d} (BSO(n); \bQ)$ of
the Hirzebruch $\cL$-class (for $4d > n$).
\end{thm}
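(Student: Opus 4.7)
The plan is to combine the multiplicativity of the Hirzebruch class $\cL$ with the signature-vanishing theorem for odd-dimensional fibre bundles quoted just above the statement, and to detect rational cohomology classes on $\cB_n$ by pairing with fundamental classes of closed oriented manifolds (Thom).

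Let $f: E \to B$ be a smooth oriented $M$-bundle with $B$ a closed oriented manifold of dimension $4d-n$, so that $\dim E = 4d$. The splitting $TE \cong T_v E \oplus f^{*}TB$ and the multiplicativity of $\cL$ give $\cL(TE) = \cL(T_v E) \cdot f^{*}\cL(TB)$, and the projection formula for the Gysin map together with Hirzebruch's signature theorem yields
\begin{equation*}
\sign(E) \;=\; \int_E \cL_{4d}(TE) \;=\; \sum_{i+j=4d} \int_B f_{!}\bigl(\cL_i(T_v E)\bigr)\cdot \cL_j(TB).
\end{equation*}
Only the indices $i$ with $4 \mid i$ and $i \geq n$ contribute, and by the quoted theorem $\sign(E) = 0$.

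I would then induct on $d$, starting with the smallest $d_0$ for which $4d_0 > n$. In the base case $d=d_0$, the inequality $4(d_0-1) \leq n$ together with the parity obstruction $i \neq n$ (forced by $n$ odd and $4 \mid i$) leaves only the term $(i,j)=(4d_0,0)$, giving $\int_B f_{!}(\cL_{4d_0}(T_v E))=0$. For the inductive step, assume $\kappa^{n}(\cL_{4d'})=0$ for every $d_0 \leq d' < d$. Since $f_{!}(\cL_{4d'}(T_v E))$ is the pullback of $\kappa^{n}(\cL_{4d'})$ along the classifying map $B \to \cB_n$ (naturality of $\kappa$), it vanishes; so every summand with $n<i<4d$ is zero, summands with $i<n$ vanish for degree reasons, and $i=n$ is again excluded by parity. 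Hence only $(i,j)=(4d,0)$ survives, and $\int_B f_{!}(\cL_{4d}(T_v E)) = 0$.

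Finally, by Thom's theorem the rational homology $H_*(\cB_n;\bQ)$ is spanned by images of fundamental classes of closed oriented manifolds under maps $B \to \cB_n$, each such map being the classifying map of a smooth oriented fibre bundle $f:E \to B$. The previous step shows $\langle \kappa^{n}(\cL_{4d}),[B] \rangle = \int_B f_{!}(\cL_{4d}(T_v E)) = 0$ for every such bundle with $\dim B = 4d-n$, hence $\kappa^{n}(\cL_{4d}) = 0$ in $H^{4d-n}(\cB_n;\bQ)$. The only delicate point is the bookkeeping in the multiplicativity sum; this works out precisely because $n$ is odd, which excludes the residual index $i=n$ that would otherwise block the induction and is morally why the analogous statement fails in even dimensions.
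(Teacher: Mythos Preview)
Your proof is correct and is precisely the argument the paper has in mind: the paper does not spell out a proof at all but simply states that the result ``follows from the multiplicativity of the signature in fibre bundles of odd dimension'' and cites \cite{Atfib}, \cite{Mey}, \cite{LR}, \cite{Eb} for the input $\sign(E)=0$. You have supplied the details the paper omits---the multiplicativity of $\cL$ along $TE\cong T_vE\oplus f^*TB$, the projection formula, the induction on $d$ (which is indeed needed to peel off the lower terms $\cL_j(TB)$ one degree at a time), and the detection of rational classes on $\cB_n$ by fundamental classes of smooth closed oriented manifolds. One cosmetic remark: when you write ``only the indices $i$ with $4\mid i$ and $i\geq n$ contribute'', the condition should really be $i>n$ (equivalently $4d'\geq 4d_0$), but you immediately note the parity obstruction $i\neq n$, so nothing is lost.
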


Theorem \ref{maintheorem1} shows that this is the only constraint.
Because the components of $\cL$ form an additive basis of $H^*
(BSO(3); \bQ)$, Theorem \ref{vanishing} forces $\kappa^3$ to be the
zero map. Thus Theorem \ref{maintheorem1} is also empty in the
$3$-dimensional case. The case $n=2$ is a classical result, which is the main ingredient for the proof of Theorem \ref{maintheorem1}.

\begin{thm}\label{morita}
The map $\kappa^{2,0} $ is injective.
\end{thm}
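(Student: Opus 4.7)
The plan is to reduce the statement to the classical nontriviality theorem of Miller and Morita for the original MMM-classes, which is exactly the input the introduction signals will be imported from \cite{Mil} and \cite{Mor}. Because $H^*(BSO(2); \bQ) = \bQ[\chi]$ with $|\chi| = 2$, the shifted space $\sigma^{-2} H^*(BSO(2); \bQ)$ vanishes in odd degrees and is one-dimensional in each positive even degree, spanned in degree $2k$ by $\chi^{k+1}$ for $k \geq 1$. Hence injectivity of $\kappa^{2,0}$ is equivalent to the assertion that for each $k \geq 1$ the class
\[
\kappa_k \ :=\ \kappa^{2,0}(\chi^{k+1}) \ =\ f_{!}\bigl(\chi^{k+1}(T_v E)\bigr) \ \in\ H^{2k}(\cB_{2}^{0}; \bQ)
\]
is nonzero, where $f : E \to \cB_{2}^{0}$ is the tautological oriented surface bundle. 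Since the vertical tangent bundle of such a bundle is an oriented $2$-plane bundle whose Euler class is the pullback of the universal class $\chi \in H^2(BSO(2); \bQ)$, the classes $\kappa_k$ just defined are, up to a fixed sign convention, the classical Morita-Mumford classes on $\coprod_{g} B\Diff^+(\Sigma_g)$.

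Next I would invoke the classical nonvanishing result. Both Miller \cite{Mil} and Morita \cite{Mor} construct, for each $k \geq 1$, a smooth oriented surface bundle $E_k \to B_k$ together with a class in $H_{2k}(B_k; \bQ)$ that detects $\kappa_k$. Miller argues via stable homotopy theory and the transfer associated with branched covers of surfaces; Morita works more directly with the mapping class group and explicit cocycles built from the intersection form on the fibre. Either result, combined with the naturality of $\kappa_E$ under pullback along the classifying map $B_k \to \cB_{2}^{0}$, yields the desired nonvanishing of each individual $\kappa_k$ and hence the injectivity of $\kappa^{2,0}$.

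The main obstacle lies entirely in this imported input: a self-contained proof would require producing the Miller-Morita surface bundles from scratch in every even codimension, which is the substantial construction the present paper explicitly defers to \cite{Mil} and \cite{Mor}. Once their theorem is granted, the only remaining step is the purely conventional check in the first paragraph that $\kappa^{2,0}(\chi^{k+1})$ agrees with the standard normalization of $\kappa_k$, and this is immediate from the definition of the Gysin image together with the identification $e(T_v E) = \chi(T_v E)$ for a rank-$2$ oriented vertical tangent bundle.
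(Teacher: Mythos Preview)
Your proposal is correct and matches the paper's treatment exactly: the paper does not prove Theorem~\ref{morita} at all but simply cites Miller \cite{Mil} and Morita \cite{Mor} (together with \cite{AKU}, \cite{MT}, \cite{MW}) as establishing it. Your reduction of injectivity of $\kappa^{2,0}$ to the nonvanishing of each $\kappa_k = \kappa^{2,0}(\chi^{k+1})$ via the one-dimensionality of $\sigma^{-2}H^{2k+2}(BSO(2);\bQ)$ is the obvious unpacking of the citation, and your identification of these with the classical Morita--Mumford classes is correct.
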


This was first established by Miller \cite{Mil} and Morita
\cite{Mor}. Today, there are other proofs by Akita-Kawazumi-Uemura
\cite{AKU} and Madsen-Tillmann \cite{MT}. Of course, the
affirmative solution of the Mumford conjecture by Madsen and Weiss \cite{MW} also
implies Theorem \ref{morita}.

We denote by $\Pont^* (n) \subset H^* (BSO(n); \bQ)$ the subring
generated by the Pontrjagin classes. If $V \to X$ is a real vector
bundle, then $\Pont (V) \subset H^* (X)$ is the subring generated by
the Pontrjagin classes of $X$. The main bulk of work to prove
Theorem \ref{maintheorem1} is:

\begin{thm}\label{maintheorem3}

\begin{enumerate}
\item For even $n$, $ \kappa^{n,0} :\sigma^{-n}\Pont^* (n) \to H^* (\cB_{n}^{0})$ is injective.
\item For odd $n $, the kernel of $\kappa^{n,0}:\sigma^{-n}\Pont^* (n) \to H^* (\cB_{n}^{0})$ is the
linear subspace that is generated by $\cL_{4d}$ (for $4d > n$).
\end{enumerate}
\end{thm}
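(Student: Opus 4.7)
The plan is to prove Theorem~\ref{maintheorem3} by reducing the Pontryagin MMM-class question to the surface-bundle case of Theorem~\ref{morita} via a combination of product-bundle and projective-bundle constructions, supplemented in odd dimensions by the Burdick-Conner-Neumann bundles recalled in the introduction.

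For even $n = 2m$, I would use product bundles $f = g_1 \times \cdots \times g_k: E_1 \times \cdots \times E_k \to B_1 \times \cdots \times B_k$ whose factors are Morita surface bundles with Euler classes $\chi_i := \chi(T_v E_i) \in H^2(E_i)$. Since each rank-$2$ vertical tangent bundle satisfies $p(T_v E_i) = 1 + \chi_i^2$, the Whitney formula gives $p(T_v f) = \prod_i (1 + \pi_i^* \chi_i^2)$, so every Pontryagin polynomial $P(T_v f)$ becomes a symmetric polynomial in the $\pi_i^* \chi_i^2$'s. Via the K\"unneth decomposition of the Gysin map, $\kappa_f(P)$ becomes a polynomial in the Morita-Mumford kappa-classes $g_{i!}(\chi_i^{2l})$, each of which is nontrivial in $H^*(\cB_{2}^{0})$ by Theorem~\ref{morita}; their products in distinct K\"unneth factors of $H^*(\prod B_i)$ are then automatically nonzero. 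However, because the Gysin map annihilates any monomial $\prod \pi_i^* \chi_i^{2 a_i}$ in which some $a_i = 0$, products of surface bundles alone only realize those Pontryagin polynomials whose symmetric-function expansion involves partitions of length equal to the number of surface factors; to reach the remaining classes, one must supplement with projective bundles $\mathbb{P}(V) \to X$ of complex rank-$(m+1)$ vector bundles $V$, whose vertical Pontryagin classes mix Chern classes of $V$ with the tautological class $u = c_1(\mathcal{O}(1))$ and thus contribute MMM-classes inaccessible from pure surface products.

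For odd $n$, Theorem~\ref{vanishing} provides the $\cL$-span as a lower bound on the kernel; the task is to produce the matching upper bound. For $n = 4k-1$, the BCN construction supplies a $(4k-1)$-manifold bundle over $\bS^1$ realizing degree-$1$ MMM-classes for every Pontryagin polynomial of degree $4k$ outside the $\cL$-span (as sketched in the introduction); taking products with Morita surface bundles extends this to higher-degree MMM-classes. For $n \equiv 1 \pmod 4$, further hybrid products involving BCN bundles from smaller odd dimensions would be required.

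The main obstacle is the combinatorial bookkeeping required to verify that the aggregate of these constructions realizes every nonzero Pontryagin polynomial (modulo the $\cL$-span). Since the Gysin map annihilates contributions where some factor enters only through the unit $1 \in H^0$, different Pontryagin polynomials are detected by different bundle types, and one must carefully combine products of surfaces with projective bundles (and, in the odd case, BCN bundles) so that the aggregate image saturates the target. The hardest part is ensuring that no linear relations beyond the Hirzebruch constraint appear in odd dimensions --- that is, excluding the possibility of an additional kernel element not generated by the $\cL_{4d}$.
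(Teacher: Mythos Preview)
Your proposal has two genuine gaps, and both are exactly where the paper introduces its key non-obvious ingredient: the loop space construction.

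\textbf{Odd case.} Your ingredients cannot reach the crucial base case $n=5$. BCN bundles over $\bS^1$ with $(n{-}1)$-dimensional total space only exist usefully when $n=4k-1$ (for $n=4k+1$ the relevant Pontryagin group $\Pont^{n+1}(n)$ vanishes), and the smaller odd dimensions you could use in ``hybrid products'' give nothing: for $n_2=3$ every class in $\Pont^*(3)=\bQ[p_4]$ is a multiple of $\cL$, and for $n_2=1$ there is nothing at all. Even in dimension $4k-1$, a BCN bundle lands in $H^1(\bS^1)$ and so detects only degree-$1$ MMM-classes; taking a product with surface bundles \emph{raises the fibre dimension} rather than producing higher-degree classes in the same dimension. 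The paper instead proves (Theorem~\ref{hirzebruch1}) that for the universal $\cp^2$-bundle $q:BS(U(1)\times U(2))\to BSU(3)$ the kernel of $\kappa_q$ on $\Pont^{4d+4}(4)$ is precisely $\bQ\cdot\cL_{4d+4}$, and then applies the loop space construction (diagram~\ref{loopspaceconstruction}): the resulting $\bS^1\times\cp^2$-bundle over $LBSU(3)$ has MMM-map with kernel exactly the $\cL$-span in $\Pont^*(5)$, because the transgression $H^*(BSU(3))\to H^{*-1}(LBSU(3))$ is injective (Proposition~\ref{transgressinjective}). You have no substitute for this step.

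\textbf{Even case.} You are right that an $m$-fold product of surface bundles detects $P\in\Pont^{4d}(2m)$ only if $P$ contains a monomial of full length $m$; but for $m\geq 3$ the residual kernel is much larger than $\spann\{\ph_{4d}\}$ (e.g.\ for $m=d=3$ the orbit sum of $x_1^2x_2$ lies in it), and for $m/2<d<m$ it is all of $\Pont^{4d}(2m)$. ``Supplementing with projective bundles'' alone does not close this: the universal $\cp^m$-bundle has base rationally $BSU(m{+}1)$, which is $3$-connected, so it detects nothing in $H^2$, and no computation you indicate handles the non-$\ph$ part of the gap. The paper instead runs an induction on $n$ using products $E_1\times E_2$ with $n_1+n_2=n$ and the full inductive hypothesis on $\kappa^{n_i,0}$ (so that, when unfolded, mixed products like surface$\times\cp^2$ enter); the combinatorial Lemma~\ref{symmetricfunctionlemma} then pins the intersection of kernels down to exactly $\spann\{\ph_{4d}\}$. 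The remaining $\ph_{4d}$ are handled by the $\cp^{2k}$-computation of Theorem~\ref{hirzebruch2} for $n\equiv 0\pmod 4$, plus the loop space construction again for $n\equiv 2\pmod 4$.
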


Theorem \ref{maintheorem3} implies Theorem \ref{maintheorem2}: for
odd $n$, $\Pont^*(n)=H^*(BSO(n))$ and for $n=2m$, the argument is so
short and easy that we give it here. The total space of the unit
sphere bundle of the universal vector bundle on $BSO(2m+1)$ is
homotopy equivalent to $BSO(2m)$ and the bundle projection
corresponds to the inclusion map $f: BSO(2m) \to BSO (2m+1)$. This
map induces an isomorphism $\Pont^* (2m+1) \to \Pont^* (2m)$. Any
element $x \in H^{*} (BSO(2m); \bQ)$ can be written uniquely as $x =
f^* x_1   \chi + f^* x_2$ with $x_i \in H^{*} (BSO(2m+1) ; \bQ)$.
Lemma \ref{eulerclass} below and Theorem \ref{maintheorem3}
immediately imply Theorem \ref{maintheorem2}.

\begin{lem}\label{eulerclass}
Let $f:BSO(2m) \to BSO(2m+1)$ be the universal $\bS^{2m}$-bundle and
let $x =  f^* x_1    \chi + f^* x_2$ be as above. Then $ p_{!} (x (T_v
BSO(2m))) = 2 x_1$.
\end{lem}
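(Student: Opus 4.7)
The plan is to identify the vertical tangent bundle $T_v BSO(2m)$ of the universal sphere bundle $f\colon BSO(2m) \to BSO(2m+1)$ with the tautological oriented rank-$2m$ bundle $L_{2m}$ on $BSO(2m)$, and then apply the projection formula together with the standard fact that the Gysin pushforward of the Euler class of the vertical tangent bundle equals the Euler characteristic of the fiber.

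First I would verify the identification $T_v BSO(2m) \cong L_{2m}$. Model $BSO(2m)$ as the Borel construction $ESO(2m+1) \times_{SO(2m+1)} S^{2m}$ coming from $S^{2m} = SO(2m+1)/SO(2m)$. Then the vertical tangent bundle is
\[
T_v BSO(2m) = ESO(2m+1) \times_{SO(2m+1)} TS^{2m}.
\]
Since $TS^{2m} = SO(2m+1) \times_{SO(2m)} \bR^{2m}$ as an $SO(2m+1)$-equivariant bundle (the $SO(2m)$-action on the tangent space at the basepoint is the standard representation), the double Borel construction collapses to
\[
T_v BSO(2m) = ESO(2m+1) \times_{SO(2m)} \bR^{2m}.
\]
Because $ESO(2m+1)$ is contractible and carries a free $SO(2m)$-action, it is a model for $ESO(2m)$, so the right-hand side is precisely the universal oriented rank-$2m$ vector bundle $L_{2m}$. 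In particular $x(T_v BSO(2m)) = x \in H^*(BSO(2m))$ for every $x$; in particular $\chi(T_v BSO(2m)) = \chi$.

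Next, apply the projection formula $f_!(f^* a \cdot b) = a \cdot f_!(b)$ to the given decomposition $x = f^* x_1 \cdot \chi + f^* x_2$:
\[
f_!\bigl(x(T_v BSO(2m))\bigr) \;=\; f_!(f^* x_1 \cdot \chi) + f_!(f^* x_2) \;=\; x_1 \cdot f_!(\chi) + x_2 \cdot f_!(1).
\]
The term $f_!(1)$ sits in $H^{-2m}(BSO(2m+1)) = 0$. For $f_!(\chi) \in H^0(BSO(2m+1)) \cong \bQ$, I would use that $\chi = \chi(T_v BSO(2m))$ together with naturality of the Gysin map under restriction to a fiber: upon such restriction $f_!$ becomes integration along $S^{2m}$, and $\chi(T_v)$ restricts to $\chi(TS^{2m})$, whose integral is the Euler characteristic $\chi(S^{2m}) = 2$ by Gauss--Bonnet. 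Hence $f_!(\chi) = 2$ and therefore $f_!(x(T_v BSO(2m))) = 2 x_1$.

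The only step with any real content is the identification $T_v BSO(2m) \cong L_{2m}$; once this is in hand, the rest is a formal application of the projection formula and the fiberwise Euler-characteristic computation.
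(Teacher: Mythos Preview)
Your proof is correct and follows exactly the same route as the paper's: identify $T_v BSO(2m)$ with the universal rank-$2m$ bundle, apply the projection formula, and use $f_!(\chi)=\chi(\bS^{2m})=2$ together with $f_!(1)=0$. You simply supply more detail for the identification $T_v BSO(2m)\cong L_{2m}$ (via the Borel construction) and for the computation of $f_!(\chi)$, which the paper states without justification.
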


\begin{proof}
The vertical tangent bundle $T_v BSO(2m)$ is isomorphic to the
universal $2m$-dimensional vector bundle. Therefore: $f_{!} (x (T_v
(E)) ) = f_{!} (x) = f_{!}( f^* x_1  \chi + f^* x_2) = x_1
f_{!}(\chi) + f_{!} (1) x_2 = 2 x_1 $, since $f_{!} (\chi) =
\chi(\bS^{2m}) = 2 $ and $f_{!} (1)=0$.
\end{proof}

The proof of Theorem \ref{maintheorem3} has two parts. The first
part is an induction argument, using Theorem \ref{morita} as
induction beginning and the second part deals with the classes that
are missed by the inductive argument.
The idea of the induction is straghtforward. Let $n$ be given. Let $f_i:E_i
\to B_i$ be manifold bundles of fibre dimension $n_i$, $i=1,2$, $n_1
+ n_2 =n$. The idea is to consider the product bundle $f= f_1 \times
f_2: E_1 \times E_2 \to B_1 \times B_2$, which has fibre dimension
$n$. The MMM-classes of the product can be expressed by the
MMM-classes of the two factors. It turns out that we can detect most, but not all MMM-classes on products of lower-dimensional manifold bundles. Here is
the exception.

Recall that the \emph{Pontrjagin character} of a real vector bundle
$V \to X$ is $\ph (V) := \ch (V \otimes \bC)$. Since $V \otimes \bC
\cong \overline{V \otimes \bC}$ (it is self-conjugate), it follows
that $\ph_{4d + 2} (V ) =0$, so $\ph$ is concentrated in degrees
that are divisible by $4$. In fact, $\ph_{4d} \in \Pont^{4d} (n)$, $
n = \rank (V)$. Note that if $V$ is itself complex, then $\ph (V) =
\ch (V \otimes_{\bR} \bC ) = \ch (V \oplus \overline{V})$.

\begin{prop}\label{inductionstep}

\begin{enumerate}
\item Let $n=2m$ be even and assume that Theorem
\ref{maintheorem3} has been proven for all even dimensions $2l < n$.
Then the kernel of $\kappa^{n,0}: \sigma^{-n} \Pont^* (n) \to H^*
(\cB_{n}^{0};\bQ)$ is contained in the span of the components
$\ph_{4d}$, $4d \geq n$.
\item Let $n=2m+1 \geq 7$ be odd and assume that Theorem
\ref{maintheorem3} has been proven for all dimensions less than $n$.
Then the kernel of $\kappa^{n,0}: \sigma^{-n} \Pont^* (n) \to H^*
(\cB_{n}^{0};\bQ)$ is contained in the span of the components
$\ph_{4d}$ and $\cL_{4d}$, $4d \geq 2m+1$.
\end{enumerate}
\end{prop}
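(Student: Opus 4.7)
The plan is to exploit a reduced coproduct on the ``universal'' Pontrjagin character ring $\bQ[\ph_4, \ph_8, \ldots]$, in which each $\ph_{4d}$ is primitive; this coproduct reflects Cartesian products of manifold bundles. Concretely, for oriented bundles $f_i\colon E_i \to B_i$ of fibre dimensions with $n_1+n_2=n$, the product $f_1 \times f_2$ has vertical tangent bundle $\pi_1^* T_v E_1 \oplus \pi_2^* T_v E_2$. Combining additivity $\ph(V \oplus W) = \ph(V) + \ph(W)$, the multiplicativity of the Gysin map on Künneth-split classes, and the vanishing $(f_i)_!(1)=0$, one obtains for any monomial $\ph^I = \prod_j \ph_{4d_j}^{a_j}$ the Leibniz formula
\[
\kappa_{E_1 \times E_2}(\ph^I) \;=\; \sum_{\substack{J+K=I \\ |J|,|K|\geq 1}} \binom{I}{J}\, \kappa_{E_1}(\ph^J) \cdot \kappa_{E_2}(\ph^K),
\]
which equals $(\kappa_{E_1} \otimes \kappa_{E_2})\circ \bar\Delta(\ph^I)$ for the reduced coproduct $\bar\Delta$.

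The crucial algebraic fact is that on positive degrees, $\ker \bar\Delta$ equals the $\bQ$-span of the $\ph_{4d}$'s. For the even case, let $c \in \ker \kappa^{n,0}$ and fix a lift $\tilde c$ to the universal ring. For each decomposition $n = n_1 + n_2$ with $n_i$ even and positive, the pullback of $\kappa^{n,0}(c)$ along $\cB^0_{n_1}\times \cB^0_{n_2}\to\cB^0_n$ vanishes, so $(\kappa^{n_1,0}\otimes \kappa^{n_2,0})(\bar\Delta \tilde c) = 0$ modulo truncation ideals. The induction hypothesis gives injectivity of each $\kappa^{n_i,0}$, hence via Künneth of their tensor product, forcing $\bar\Delta \tilde c$ to vanish in $\Pont^*(n_1)\otimes \Pont^*(n_2)$. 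Combining all even splittings forces $\tilde c$ to be primitive modulo the truncation ideal of $\Pont^*(n)$; its image in $\Pont^*(n)$ therefore lies in the span of the $\ph_{4d}$'s with $4d \geq n$ (the range is forced by $c \in \sigma^{-n}\Pont^*(n)$).

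For the odd case $n = 2m+1 \geq 7$ one uses splittings $n = n_1 + n_2$ with $n_1$ odd and $\geq 3$, $n_2$ even and $\geq 2$, which exist precisely because $n \geq 7$. By induction, $\ker \kappa^{n_1,0}$ is the $\bQ$-span of the $\cL_{4d}$'s and $\kappa^{n_2,0}$ is injective, so $\ker(\kappa^{n_1,0}\otimes \kappa^{n_2,0}) = \spann(\cL_{4d}) \otimes \Pont^*(n_2)$. The same product-bundle argument forces $\bar\Delta \tilde c$ to lie in this subspace for every such splitting, and a Hopf-algebra analysis then identifies $c$ as a linear combination of $\ph_{4d}$'s and $\cL_{4d}$'s.

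The principal obstacle is twofold. First, since $\ph_{4d}$ for $4d > 2\lfloor n/2\rfloor$ becomes a nontrivial polynomial in lower $\ph$'s when restricted to $\Pont^*(n)$, one must carefully relate the coproduct on the universal ring to the actual MMM classes in $\Pont^*(n)$, tracking the truncation ideals across all splittings. Second, in the odd case $\cL_{4d}$ is not primitive in the $\ph$-basis, so its coproduct has many mixed terms; showing that ``$\bar\Delta \tilde c$ lies in the $\cL$-ideal for every valid splitting'' forces $c$ into the span of $\ph$'s and $\cL$'s is the heart of the argument and the main technical content.
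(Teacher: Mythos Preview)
Your overall strategy coincides with the paper's: use products of lower-dimensional bundles, observe that the MMM-class on a product is governed by the Whitney-sum coproduct, and then invoke the inductive hypothesis to reduce the question to a purely algebraic one about the intersection of kernels of the truncated coproduct maps. Your Leibniz formula for $\kappa_{E_1\times E_2}(\ph^I)$ is correct and is exactly the commutative diagram the paper writes down with the map $r_{n_1,n_2}:\sigma^{-n}\Pont^*(n)\to\sigma^{-n_1}\Pont^*(n_1)\otimes\sigma^{-n_2}\Pont^*(n_2)$.

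The gap is that you do not actually prove the algebraic core, and the Hopf-algebraic reformulation does not bypass it. Your statement ``$\ker\bar\Delta$ equals the span of the $\ph_{4d}$'s'' is true for the \emph{untruncated} symmetric-function Hopf algebra, but what you need is that $\bigcap_{n_1+n_2=n}\ker r_{n_1,n_2}$ is spanned by the $\ph_{4d}$'s, where each $r_{n_1,n_2}$ kills not only the primitive pieces but everything landing in bidegrees below $(n_1,n_2)$. These truncation ideals are not Hopf ideals, so primitivity in the universal ring does not translate directly; indeed, after truncation many non-primitive elements acquire ``fake'' primitivity for a single splitting, and the whole point is to show that only genuine power sums survive \emph{all} splittings simultaneously. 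The paper handles this by passing to the Pontrjagin-root variables $x_1,\dots,x_m$, where the relevant subspaces are spanned by monomials, and then does an explicit case analysis showing that any impure monomial of degree $\geq 2m$ is excluded by at least one splitting. You correctly flag this as ``the main technical content'', but your proposal contains no argument for it.

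The odd case is worse: you assert that ``a Hopf-algebra analysis then identifies $c$ as a linear combination of $\ph_{4d}$'s and $\cL_{4d}$'s'' without any indication of how this would go. Since $\cL_{4d}$ is far from primitive, there is no general Hopf-theoretic mechanism that singles it out; what makes the argument work in the paper is the specific fact that the coefficients of the power series $\sqrt{x}\coth(\sqrt{x})$ are all nonzero, combined with a short lemma (Lemma~\ref{powerseries}) characterising symmetric polynomials that lie in $\bQ[x_1]\otimes\spann\{\cL_i\}$. This step is also why the hypothesis $n\geq 7$ (equivalently $m\geq 3$) is needed, a constraint your sketch does not explain.
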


The proof is given in section \ref{inductionstepsection}. By Proposition \ref{inductionstep} and Theorem \ref{morita}, two steps remain to be done for the proof of Theorem \ref{maintheorem3} and hence Theorem \ref{maintheorem1}. We have to show that $\kappa^{n}(\ph_{4d}) \neq 0$ for all $4d \geq n \geq 4$. Furthermore, we have to show the case $n=5$ of Theorem \ref{maintheorem3} from scratch.

There are two ideas involved: we do explicit computations for
bundles of complex projective spaces and then we use what we call ''loop
space construction'' to increase the dimension.

Let the group $SU(m+1)$ act on $\cp^m$ in the usual way. Consider
the Borel-construction $q:E(SU(m+1);\cp^m) \to BSU(m+1)$. In section
\ref{cpnsection}, we will show the following result.

\begin{thm}\label{hirzebruch2}
For all $d \geq k$, the class $\kappa_{E(SU(2k+1);\cp^{2k})}
(\ph_{4d}) \in H^{4d-4k}(BSU(2k+1))$ is nonzero.
\end{thm}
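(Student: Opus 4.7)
The plan is to realize $E(SU(2k+1); \cp^{2k})$ as the homogeneous-space bundle $f: BH \to BG$ with $G = SU(2k+1)$ and $H = S(U(2k) \times U(1))$, compute $f_!$ using Atiyah-Bott localization on a common maximal torus $T$, and prove nonvanishing via a well-chosen specialization of torus coordinates.

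First I would identify the vertical tangent bundle. As an $H$-representation, $T_{[H]}(G/H) = \fg/\mathfrak{h} \cong V \otimes L^{-1}$, where $V$ is the standard $U(2k)$-representation and $L$ the standard $U(1)$-representation. Pulling back to $BT$ with coordinates $t_1, \ldots, t_{2k+1}$ subject to $\sum_i t_i = 0$, the Chern roots of $T_v$ are $t_j - t_{2k+1}$ for $j = 1, \ldots, 2k$, giving
\[
\ph_{4d}(T_v) = \frac{2}{(2d)!} \sum_{j=1}^{2k+1}(t_j - t_{2k+1})^{2d}.
\]

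Next, Atiyah-Bott localization applied to the $T$-action on $\cp^{2k}$ (whose fixed points are the $2k+1$ coordinate lines, with tangent weights $\{t_j - t_a\}_{j \neq a}$ at the $a$-th point), together with the $W(H) = S_{2k}$-invariance of the integrand, yields $f_!(\ph_{4d}(T_v)) = \frac{2}{(2d)!}\, F_{2d}(t)$ where
\[
F_{2d}(t) = \sum_{a=1}^{2k+1} \frac{\sum_{j=1}^{2k+1}(t_j - t_a)^{2d}}{\prod_{\ell \neq a}(t_\ell - t_a)}.
\]
Expanding binomially and invoking the standard identity $\sum_a t_a^r/\prod_{\ell \neq a}(t_a - t_\ell) = h_{r-2k}(t)$ (which vanishes for $r < 2k$), $F_{2d}$ rewrites as an explicit polynomial combination of power sums $p_s(t)$ and complete symmetric functions $h_m(t)$, lying in $H^*(BT)^{S_{2k+1}} = H^*(BSU(2k+1))$.

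To check $F_{2d} \neq 0$, I would specialize to $t_1 = u$, $t_2 = -u$, $t_3 = \cdots = t_{2k+1} = 0$, which preserves $\sum_i t_i = 0$. Then $p_s = 2u^s$ for even $s > 0$ (with $p_0 = 2k+1$) and $p_s = 0$ for odd $s$, while $\prod_i(1 - zt_i) = 1 - z^2u^2$ gives $h_{2j} = u^{2j}$ and $h_{\mathrm{odd}} = 0$. Only even-index terms contribute; separating out the term corresponding to the $p_0$ factor, one obtains
\[
F_{2d}\big|_{\mathrm{spec}} = u^{2d-2k}\left[(2k+1) + 2\sum_{j=0}^{d-k-1}\binom{2d}{2j+2k}\right],
\]
which is a nonzero monomial times a positive sum of binomial coefficients. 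By Weyl-invariance, $F_{2d}$ is therefore nonzero in $H^*(BSU(2k+1))$. The main technical hurdle is sign bookkeeping in the localization formula --- the evenness of $N-1 = 2k$ being essential --- but thereafter the computation is routine. The strategic insight is the specific choice of specialization: roots of unity or other symmetric configurations produce oscillating sums, whereas $(u, -u, 0, \ldots, 0)$ makes the positivity of the relevant binomials directly visible.
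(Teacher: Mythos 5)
Your proof is correct and reaches the theorem by a genuinely different route. The paper avoids localization entirely: it takes the explicit $\cp^{2k}$-bundle $q:\bP(V\oplus\bC^{2k-1}) \to BSU(2)$ with $V$ the tautological rank-$2$ bundle, uses the Leray--Hirsch presentation $H^*(\bP(V\oplus\bC^{2k-1})) \cong \bQ[u,z]/(z^{2k+1}+uz^{2k-1})$ to compute $q_!$ explicitly (so $q_!(z^{2k+2l})=(-1)^l u^l$, $q_!(z^{\mathrm{odd}})=0$), and then reads off that $q_!(\ch(T_v))=\sum_p a_p u^p$ with each $a_p$ equal to $(-1)^p$ times a visibly positive sum. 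Since this bundle is pulled back from the universal one along the block-diagonal $BSU(2)\to BSU(2k+1)$, naturality finishes the argument.

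Your calculation is the universal version of the same thing: the specialization $(t_1,\ldots,t_{2k+1})\mapsto(u,-u,0,\ldots,0)$ is precisely the cohomological restriction along that block-diagonal inclusion, so the two arguments detect nonvanishing in exactly the same subring $\bQ[u]\subset H^*(BSU(2k+1))$. What your route buys is the full symmetric-function expression $F_{2d}=\sum_{i}\binom{2d}{i}(-1)^i p_i\, h_{2d-i-2k}$ over $BSU(2k+1)$, rather than only its image in $\bQ[u]$; what the paper's route buys is elementarity, as the projective-bundle relation makes the Gysin map transparent without invoking equivariant localization. One point to state more carefully in a polished write-up: one cannot substitute $(u,-u,0,\ldots,0)$ directly into the localization sum, since the denominators $\prod_{\ell\neq a}(t_\ell-t_a)$ then vanish; you correctly clear them first via the divided-difference identity $\sum_a t_a^r/\prod_{\ell\neq a}(t_a-t_\ell)=h_{r-2k}$, but that ordering of steps deserves to be made explicit. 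Also be alert to the fact that the even number $2k$ of factors in the denominator is what makes $\prod_{\ell\neq a}(t_\ell-t_a)=\prod_{\ell\neq a}(t_a-t_\ell)$, so the identity applies with no stray sign.
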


To finish the proof of Theorem \ref{maintheorem1} in
the even-dimensional case it remains to prove that $\kappa^{4k+2}(\ph_{4d})\neq 0$ if $4d \geq 4k+2 \geq 6$.
To do this, we employ the loop space construction that we describe now.

Let $M$ be an oriented closed $n$-manifold and $f:E \to X$ a smooth
oriented $M$-bundle. Let $LX$ be the free loop space of $X$ and let
$\ev: \bS^1 \times LX \to X$ be the evaluation map
$\ev(t,\gamma):=\gamma(t)$.

Consider the diagram ($\proj$ is the obvious projection):

\begin{equation}\label{loopspaceconstruction}
\xymatrix{
\fL E:= \bS^1 \times LX \times_{X} E \ar[d]^{ f \dash} \ar[r]^-{h}   &  E \ar[d]^{f}\\
 \bS^1 \times LX \ar[r]^{\ev} \ar[d]^{\proj} &  X  \\
LX   &   \\
}
\end{equation}

The composition on the left-hand side is denoted $\fL p:= \proj
\circ f \dash: \fL E \to LX$; this is a smooth oriented $\bS^1
\times M$-bundle. We call it the \emph{loop space construction} on
the bundle $E$.

The generalized MMM-classes of $\fL E \to LX$ can be expressed in
terms of those of $E \to X$. The result is that the following
diagram is commutative:

\begin{equation}\label{loopspacetransg}
\xymatrix{
\Pont^{*} (n+1) \ar[r] \ar[d]^{\kappa_{\fL E}} & \Pont^{*} (n) \ar[d]^{\kappa_{E}} \\
H^{*-n-1} (LX)   & H^{*-n} (X). \ar[l]_-{\trg} }
\end{equation}

The bottom map is the \emph{transgression}, see Definition
\ref{deftransgress}. We can of course iterate the loop space
construction. When we apply it $r$ times to the $M$-bundle $E \to
X$, we obtain an $(\bS^1)^r \times M$-bundle $\fL^r p :\fL^r E \to
L^r (X)= \map ((\bS^1)^r; X)$. Also, the transgression can be
iterated and gives $\trg^r : H^{*} (X) \to H^{*-r} (L^r X)$. Now let
$f:E \to X$ be an $M^{4k}$-bundle and let $4d \geq 4k + r$. Assume
that $f_{!} (\ph_{4d} (T_v E)) \in H^{4d-4k} (X)$ is nonzero. Since
$\ph_{4d}$ does not lie in the kernel of the restriction $H^*
(BSO(4d+r)) \to H^* (BSO(4n))$, the class $\kappa^{\fL^r E}
(\ph_{4d}) \in H^{4d-4k-r} (L^r X)$ is nontrivial provided that
$f_{!} (\ph_{4d} (T_v E)) \in H^{4d-4k} (X)$ does not lie in the kernel
of $\trg^r$.

For a general space $X$, the transgression is far from being
injective, but it is injective if $X$ is simply-connected and the
rational cohomology of $X$ is a free graded-commutative algebra,
compare \ref{transgressinjective}. If $X$ is an addition
$r$-connected, then $\trg^r$ is injective.

The base space $BSU(2k+1)$ of the universal $\cp^{2k}$-bundle in
Theorem \ref{hirzebruch2} is $3$-connected and its rational
cohomology is a polynomial algebra and so $\trg^r$ is injective for
$r=1,2,3$. Therefore, Theorem \ref{hirzebruch2} implies that
$\kappa^n (\ph_{4d}) \neq 0$ if $n =4k+r$ for $0 \leq r \leq 3$.
This concludes, by Proposition \ref{inductionstep}, the proof of
Theorem \ref{maintheorem1} in the even-dimensional case.

For the odd-dimensional case, the only thing that is left is the
induction beginning (in dimension $5$). This is accomplished by the
same method.

\begin{thm}\label{hirzebruch1}
Let $q: E(SU(3); \cp^2) \to BSU(3)$ be the Borel construction and
$d>0$. Then the kernel of $\kappa_{E(SU(3); \cp^2)}: \Pont^{4d+4}(4)
\to H^{4d}(BSU(3))$ is one-dimensional and spanned by $\cL_{4d+4}$.
\end{thm}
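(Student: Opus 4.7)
My plan is to realize the bundle concretely as $q\colon BU(2) \to BSU(3)$ and compute $\kappa_q$ by hand. Since the stabilizer of $[e_1] \in \cp^2$ under the standard $SU(3)$-action is $S(U(1) \times U(2)) \cong U(2)$ (via $(\lambda,A) \mapsto A$), the Borel construction satisfies $E(SU(3); \cp^2) \simeq BU(2)$, with $q$ induced by the embedding $A \mapsto \operatorname{diag}((\det A)^{-1}, A)$. A direct analysis of the stabilizer representation on $T_{[e_1]}\cp^2$ identifies the vertical tangent bundle as $T_v \cong \det(V) \otimes V$, where $V$ is the universal rank-$2$ bundle. Writing $c_1, c_2$ for the Chern classes of $V$, this gives $c_1(T_v) = 3c_1$ and $c_2(T_v) = 2c_1^2 + c_2$, whence $p_4(T_v) = 5c_1^2 - 2c_2$ and $p_8(T_v) = (2c_1^2 + c_2)^2$. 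The Gysin map then follows from the standard Segre-class formula $q_!(c_1^n) = s_{n-2}(W)$ for a projective bundle, where $c(W) = 1 + \bar c_2 + \bar c_3$ and $\bar c_i$ are the pullbacks of the Chern classes from $BSU(3)$; combined with the projection formula and the relation $c_2 = \bar c_2 + c_1^2$, this expresses $\kappa_q(p_4^a p_8^b)$ as an explicit polynomial in $\bar c_2, \bar c_3$.

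To show that $\cL_{4d+4}$ lies in $\ker \kappa_q$, I would invoke the family signature formula. Since $BSU(3)$ is simply connected, the monodromy on $H^*(\cp^2;\bQ)$ is trivial, so the Atiyah--Singer family signature theorem specializes to $q_!(\cL(T_v)) = \operatorname{sign}(\cp^2) \cdot 1 = 1 \in H^0(BSU(3))$. Extracting the positive-degree components immediately yields $q_!(\cL_{4d+4}(T_v)) = 0$ for every $d \geq 1$.

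The main obstacle is showing that $\dim \ker \kappa_q = 1$ in each degree $4d+4$. I would address this by analyzing the matrix of $\kappa_q\colon \Pont^{4d+4}(4) \to H^{4d}(BSU(3))$ in the monomial bases $\{p_4^a p_8^b\}$ and $\{\bar c_2^i \bar c_3^j\}$ produced above, and verifying that its rank equals $\dim \Pont^{4d+4}(4) - 1$. A uniform combinatorial argument seems to be the trickiest ingredient; one natural approach is to encode the matrix entries via the generating series $\sum_{a,b} \kappa_q(p_4^a p_8^b) x^a y^b$ and exploit the structure of $s(W) = 1/(1+\bar c_2+\bar c_3)$, or alternatively to work modulo $\cL_{4d+4}$ and bootstrap from the non-vanishing statements about $\kappa_q(\ph_{4d})$ already established in the proof of Theorem~\ref{hirzebruch2}, leveraging that $\ph_{4d+4}$ and $\cL_{4d+4}$ are linearly independent in $\Pont^{4d+4}(4)$.
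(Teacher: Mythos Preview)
Your setup is correct: the identifications $E(SU(3);\cp^2)\simeq BU(2)$ and $T_v\cong\det(V)\otimes V$ are right, your Pontrjagin-class formulae are right, and your argument that $\cL_{4d+4}\in\ker\kappa_q$ via the family signature theorem matches one of the three proofs the paper gives for this step (Proposition~\ref{linkernel}).

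The genuine gap is the rank computation, which is the heart of the theorem and which you do not actually carry out. Knowing $\kappa_q(\ph_{4d+4})\neq 0$ and $\kappa_q(\cL_{4d+4})=0$ only shows the image is at least one-dimensional; since $\dim\Pont^{4d+4}(4)=\lfloor(d+1)/2\rfloor+1$, this pins down the kernel only for $d\leq 2$. For $d\geq 3$ you need the full rank, and neither of your two proposed routes (generating series, or bootstrapping from Theorem~\ref{hirzebruch2}) is developed into an argument; the latter in particular supplies only a single nonzero image vector and cannot by itself bound the kernel.

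The paper resolves this differently and the comparison is instructive. Rather than restricting to $\Pont^{4d+4}(4)$, it enlarges the source to all of $H^{4d+4}(BSO(4))=\bQ[p_4,\chi]_{4d+4}$ (dimension $d+2$) and proves the kernel of $q_{!}\circ h^*$ is exactly two-dimensional there. The key maneuver is to write any class as $a\,\cL_{4d+4}+\chi\cdot F(\chi,p_4)$ and use the identity $q_{!}(\chi\cdot-)=\trf_q^*(-)$ to replace the Gysin map by the \emph{transfer}; passing to the maximal torus of $SU(3)$, the transfer becomes the Weyl-group averaging operator (Lemma~\ref{transferchernweil}), which converts the image computation into an explicit symmetric-function problem: one shows the elements $s_1^{d-j}s_j$ ($j=0,\ldots,d$) span a space of dimension exactly $d$. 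Finally the paper exhibits a second kernel element, $(p_4-\chi)^{d+1}$, and checks it does not lie in $\Pont^{4d+4}(4)$, so the intersection of the kernel with $\Pont^{4d+4}(4)$ is spanned by $\cL_{4d+4}$ alone. Your Segre-class matrix is in principle computable and the approach could succeed, but you would need an honest corank-one argument for all $d$; the transfer/averaging trick is what makes the paper's computation uniform.
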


\begin{cor}
Let $\fL q :\fL E :=  \fL E(SU(3); \cp^2) \to L BSU(3)$ (it is an
$\bS^1 \times \cp^2$-bundle). Then the kernel of $(\fL q)_{!}
:\Pont^{*+5} (T_v \fL E) \to H^* (L BSU(3))$ is spanned by the
components of the Hirzebruch class.
\end{cor}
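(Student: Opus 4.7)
The plan is to apply the commutative diagram \eqref{loopspacetransg} to $E = E(SU(3);\cp^2)\to X=BSU(3)$ with $n=4$, so that $\fL q : \fL E \to L BSU(3)$ is an $\bS^1\times\cp^2$-bundle of fibre dimension $5$. That diagram identifies $(\fL q)_{!}$, restricted to the Pontryagin subring, with the composite
\[
\Pont^{*}(5) \xrightarrow{\res} \Pont^{*}(4) \xrightarrow{\kappa_{E}} H^{*-4}(BSU(3)) \xrightarrow{\trg} H^{*-5}(L BSU(3)),
\]
where $\res$ is induced by the inclusion $BSO(4)\hookrightarrow BSO(5)$. The strategy is to show that $\res$ is an isomorphism and $\trg$ is injective, so that the kernel reduces to the one already computed in Theorem~\ref{hirzebruch1}.

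First, I would observe that $\res:\Pont^{*}(5)\to \Pont^{*}(4)$ is an isomorphism. Both sides are isomorphic as abstract graded rings to $\bQ[p_4,p_8]$: for $BSO(5)$ this is the full rational cohomology, while the Pontryagin subring of $H^{*}(BSO(4))=\bQ[p_4,\chi]/(\chi^2-p_8)$ is exactly $\bQ[p_4,p_8]$ with $p_8$ realised as $\chi^2$. The map $\res$ sends $p_{4k}\mapsto p_{4k}$. Because the Hirzebruch class is defined by a universal power series in the $p_{4k}$'s, $\res$ matches each component $\cL_{4d}$ in $\Pont^{*}(5)$ with the corresponding $\cL_{4d}$ in $\Pont^{*}(4)$.

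Next, I would use that $BSU(3)$ is simply-connected with polynomial rational cohomology, so that by the criterion recalled in \ref{transgressinjective} the transgression $\trg:H^{*}(BSU(3))\to H^{*-1}(L BSU(3))$ is injective. Consequently the kernel of the full composite is the preimage under $\res$ of $\ker(\kappa_{E})$.

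Finally, Theorem~\ref{hirzebruch1} tells us that for each $d>0$ the kernel of $\kappa_{E}$ in degree $4d+4$ is the line spanned by $\cL_{4d+4}\in\Pont^{4d+4}(4)$. Pulling this back through the isomorphism $\res$ gives that the kernel of $(\fL q)_{!}$ is exactly the span of the Hirzebruch components $\cL_{4d}\in\Pont^{*}(5)$, as required. The whole argument is a direct assembly of pieces already in hand, so no real technical obstacle is expected; the only mild point of care is verifying that $\res$ is an isomorphism, which is immediate from the explicit descriptions of the cohomology rings in this low-dimensional case.
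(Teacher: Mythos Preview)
Your proof is correct and follows exactly the route the paper indicates: combine the commutative diagram \ref{loopspacetransg}, Theorem \ref{hirzebruch1}, and Proposition \ref{transgressinjective}. Your only addition is to make explicit that the top arrow $\Pont^{*}(5)\to\Pont^{*}(4)$ is an isomorphism, which the paper leaves implicit; this is a helpful clarification, and your verification of it is fine. One small point of precision: Proposition \ref{transgressinjective} gives injectivity of $\trg$ only on \emph{reduced} cohomology, but since in the relevant degrees ($4d\geq 8$) the image of $\kappa_E$ lands in $H^{4d-4}(BSU(3))$ with $4d-4\geq 4>0$, this causes no trouble.
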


The corollary follows immediately from Theorem \ref{hirzebruch1},
diagram \ref{loopspacetransg} and Proposition
\ref{transgressinjective}. This gives the induction beginning and
finishes the proof of Theorem \ref{maintheorem1}. Actually, it is
quite surprising that a single $5$-manifold, namely $\bS^1 \times
\cp^2$, sufficed to detect all MMM-classes.

Once Theorem \ref{maintheorem1} is shown, Theorem \ref{maintheorem2}
is a rather formal consequence that uses the Barratt-Priddy-Quillen
Theorem on the infinite symmetric group. We will not give any
details here and refer to section \ref{lineartoalgebraicsection}
instead.

The proof of Theorem \ref{holomorphiccase} is a simple variation of
the proofs of Theorems \ref{maintheorem1} and \ref{maintheorem2} and
will be discussed in section \ref{holomorphic}.

\section{The induction step}\label{inductionstepsection}

In this section, we prove Proposition \ref{inductionstep}. First we
recall that the Hirzebruch $\cL$-class is the multiplicative
sequence in the Pontrjagin classes that is associated with the
formal power series

\begin{equation}\label{deflclass}
\sqrt{x} \cotanh (\sqrt{x}) = \sum_{d =0}^{\infty}\frac{2^{2k}
B_{2k}}{(2k)!} x^k,
\end{equation}

where $B_{2k}$ denote the Bernoulli numbers. It is crucial for our
proofs that $B_{2k} \neq 0$.

The main part of the proof is pure linear algebra. The Whitney sum
map $BSO(n_1) \times BSO(n_2) \to BSO(n)$ ($n_1 + n_2 =n$) induces a
map

\[r_{n_1,n_2} : \sigma^{-n} \Pont^* (n) \to \sigma^{-n_1}
\Pont^* (n_1) \otimes \sigma^{-n_2} \Pont^* (n_2).\]

Furthermore, we let $L(n) \subset \sigma^{-n} \Pont^* (n)$ be the
subspace spanned by the components of the Hirzebruch $\cL$-class.
For $n_1 + n_2 =n$, let $\tilde{r}_{n_1,n_2}$ be the composition

\[
\begin{split}
 \tilde{r}_{n_1,n_2} : \sigma^{-n} \Pont^* (n)
\stackrel{r_{n_1,n_2}}{\to} \sigma^{-n_1} \Pont^* (n_1) \otimes
\sigma^{-n_2} \Pont^* (n_2) \to \\
 \sigma^{-n_1} \Pont^*
(n_1) \otimes (\sigma^{-n_2} \Pont^* (n_2)) / L(n_2)
\end{split}
\]

with the quotient map.

\begin{lem}\label{symmetricfunctionlemma}
\begin{enumerate}
\item Let $n=2m$. Then the intersection $\bigcap_{m_1 + m_2 =m, 0 < m_1 < m} \ker
(r_{2m_1, 2m_2}) \subset \sigma^{-n} \Pont^* (n)$ is the vector
space spanned by the elements $\ph_{4d}$, $4d \geq n$.
\item Let $n = 2m+1 \geq 7$. Then the intersection $\bigcap_{m_1 + m_2 =m, 0 < m_1 < m} \ker
(\tilde{r}_{2m_1, 2m_2+1}) \subset \sigma^{-n} \Pont^* (2m+1)$ is
the vector space spanned by $\ph_{4d}$ and $\cL_{4d}$, $4d \geq n$.
\end{enumerate}
\end{lem}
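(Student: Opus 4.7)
The plan is to use the splitting principle to reformulate the lemma as a question about symmetric functions in the Pontrjagin roots, and then to exploit the Hopf-algebraic structure coming from direct sums. Identify $\Pont^*(n) \cong \bQ[y_1,\ldots,y_m]^{S_m} = \bQ[p_1,\ldots,p_m]$, where $y_1,\ldots,y_m$ are the formal Pontrjagin roots (of cohomological degree $4$) and $p_d=\sum_i y_i^d$. Under this identification, $r_{n_1,n_2}$ becomes the variable-splitting map $f(y)\mapsto f(y_I\sqcup y_J)$ with $|I|=m_1$ and $|J|=m_2$, while the $\sigma$-shift amounts to discarding those terms of the image in which one of the two tensor factors is a scalar.

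\textbf{Part 1.} Expand $f=\sum_\lambda c_\lambda m_\lambda$ in the monomial symmetric function basis. The identity $m_\lambda(y_I\sqcup y_J) = \sum_{\alpha+\beta=\lambda} m_\alpha(y_I)\,m_\beta(y_J)$ shows, after discarding pure terms, that $r_{2m_1,2m_2}(f) = \sum_{\alpha,\beta\neq\emptyset} c_{\alpha+\beta}\,m_\alpha\otimes m_\beta$. Its vanishing forces $c_\lambda=0$ for every $\lambda$ admitting a non-trivial multiset bipartition $\lambda=\alpha+\beta$ with $\ell(\alpha)\leq m_1$ and $\ell(\beta)\leq m_2$; letting $(m_1,m_2)$ range over all admissible pairs then eliminates $c_\lambda$ for every $\lambda$ of length $\geq 2$. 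What remains is $f=\sum_d c_{(d)}\,p_d$, which is precisely the span of $\ph_{4d}=\tfrac{2}{(2d)!}\,p_d$ in the relevant degrees.

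\textbf{Part 2.} The decisive new input is that the total Hirzebruch class $\cL=\prod_i L(y_i)$, being multiplicative under direct sums, is a \emph{group-like} element of the Hopf algebra structure: writing $\omega:=\log\cL=\sum_{k\geq 1}\alpha_k p_k$ (a primitive, since $\log L(y)$ is a power series in $y$ without constant term), one obtains $\Delta(\cL_{4d})=\sum_{d_1+d_2=d}\cL_{4d_1}\otimes\cL_{4d_2}$, and hence $\tilde r(\cL_{4d})=0$. Combined with the primitivity of each $p_d$, this gives the easy inclusion $\spann\{\ph_{4d},\cL_{4d}\}\subseteq\bigcap\ker\tilde r$. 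For the reverse, equip $\Pont^*(n)$ with the Hall inner product, under which the reduced coproduct is dual to multiplication of positive-degree elements. A short computation translates $\bar r(f)\in\Pont^*(n_1)\otimes L(n_2)$ into the orthogonality $f\perp\Pont^*(n)_{>0}\cdot A$, where $A:=L^\perp$. Group-likeness of $\cL$ implies $A=\ker\chi$ for the algebra homomorphism $\chi:\Pont^*(n)\to\bQ$ with $\chi(p_d)=d\alpha_d$, so $A$ is a graded ideal; the triangular change of polynomial generators $q_d := \chi(p_1^d)\,p_d - \chi(p_d)\,p_1^d$ for $d\geq 2$ identifies $A$ as the ideal $(q_2,q_3,\ldots)$, with exactly one minimal generator per degree $\geq 2$. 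A dimension count then gives $\dim(\Pont^*(n)_{>0}\cdot A)_{4d}=p(d)-2$, matching the codimension of $\spann\{p_d,\cL_{4d}\}$ in $\Pont^*(n)_{4d}$; taking orthogonal complements yields the claimed equality.

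The main obstacle lies in Part 2: one has to recognize the group-like nature of $\cL$ (from which $L^\perp$ being an ideal follows), and then pin down the ideal-theoretic structure of $L^\perp$ as having a single minimal generator per positive degree $\geq 2$. The rest is orthogonality bookkeeping and a dimension count.
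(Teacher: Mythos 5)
Your Part~1 argument has a genuine gap at the step ``letting $(m_1,m_2)$ range over all admissible pairs then eliminates $c_\lambda$ for every $\lambda$ of length $\geq 2$.'' You describe the effect of the $\sigma^{-n}$-shift as merely ``discarding pure terms,'' i.e.\ discarding $m_\alpha\otimes m_\beta$ when one factor is empty. This is not what the shift does: passing to $\sigma^{-n_1}\Pont^*(n_1)\otimes\sigma^{-n_2}\Pont^*(n_2)$ kills every tensor factor whose \emph{cohomological degree is at most $n_i$}, not just the degree-zero part. So the condition you need is not ``$\alpha,\beta\neq\emptyset$ with $\ell(\alpha)\leq m_1$, $\ell(\beta)\leq m_2$'' but additionally a lower bound on $|\alpha|$ and $|\beta|$ relative to $m_1$ and $m_2$. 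Once that constraint is in place, the assertion that one can always find a compatible split for any $\lambda$ of length $\geq 2$ (and total degree $4d\geq n$) is nontrivial and is exactly where the paper's argument does real work: the paper reduces to showing each impure \emph{monomial} fails to lie in the intersection, and then runs a three-case analysis on $k=\ell(\lambda)$ and the size of the smallest exponent $d_k$, producing in each case an explicit split for which the degree bounds are satisfied. You never carry out this analysis, and the degree hypothesis $4d\geq n$ in the lemma (which is precisely what makes the case analysis possible) plays no role in your argument -- a sign something is missing. (You also write $\alpha+\beta=\lambda$ where the monomial-symmetric coproduct formula involves the multiset union $\alpha\cup\beta=\lambda$; this is likely just notation, but worth fixing.)

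For Part~2, your observation that $\cL$ is group-like (so $\log\cL$ is primitive, hence a power-sum series) is a nice reformulation of the multiplicativity that the paper also exploits, and the easy inclusion is fine. The converse via the Hall inner product is sketchier than you acknowledge. First, the Hall pairing and the product/coproduct duality are features of the \emph{infinite-variable} Hopf algebra $\Lambda$; here you are working in $\bQ[p_4,\dots,p_{4m}]$, and the Lemma's range $4d\geq n=2m+1$ includes degrees $d>m$ where the finite-variable truncation changes the dimensions -- so your count $\dim\Pont^{4d}(n)=p(d)$ and the claimed $\dim(\Pont^*(n)_{>0}\cdot A)_{4d}=p(d)-2$ are unjustified outside the stable range. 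Second, the assertion that $A=\ker\chi$ is the ideal $(q_2,q_3,\dots)$ with exactly one minimal generator per degree $\geq 2$ is stated without proof. Third, you analyse a single $\tilde r_{n_1,n_2}$ but never engage with the intersection over all admissible $(m_1,m_2)$. Finally, the hypothesis $n\geq 7$ (equivalently $m\geq 3$), which the paper uses crucially in its Lemma~\ref{powerseries} about multiplicative sequences, never appears in your argument -- another indication of a missing ingredient. The paper's route is more elementary: it identifies the intersection of the $L$-contributions with $\bQ[x_1]\otimes L(2m-1)$ (the $m_1=1$ split) and then invokes a short computation with multiplicative sequences, using only that the Bernoulli coefficients of $\cL$ are all nonzero.

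In short: the Hopf-algebraic framing of Part~2 is a promising alternative viewpoint that parallels the paper's use of multiplicativity, but both parts of your proposal leave the hard combinatorics -- which is the substance of this lemma -- unproved.
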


\begin{proof}[Proof of Lemma \ref{symmetricfunctionlemma}, part 1]
We identify $\Pont^* (2m) = \bQ [x_1, \ldots , x_m]^{\Sigma_m}$,
where $x_1, \ldots , x_m$ are indeterminates of degree $4$, the
Pontrjagin classes correspond to elementary symmetric functions and
$\ph_{4d}$ to $ x_{1}^{d} + \ldots + x_{m}^{d}$. Let us introduce
some abbreviations. If $S = \{ i_1, \ldots ,i_s\} \subset
\underline{m}$, then $V_S := \bQ[x_{i_1}, \ldots, x_{i_s}]$.
Moreover, $V_{S}^{<d}$ denotes the subspace of element of degree
less than $d$ (and, as usual, all degrees are total degrees).

The kernel of $r_{2k,2n-2k}$ agrees, up to a degree shift, with the
kernel of the quotient map

\[
V_{\{1,\ldots ,m\}} \to \frac{V_{\{1, \ldots k\}} \otimes V_{\{k+1,
\ldots , m\}}}{V_{\{1, \ldots k\}}^{<2k} \otimes V_{\{k+1, \ldots ,
m\}} \oplus V_{\{1, \ldots k\}} \otimes V_{\{k+1, \ldots ,
m\}}^{<2m-2k}}
\]

which is the same as

\begin{equation}\label{bigintersection}
\bigcap_{k=1}^{m-1} V_{\{1,\ldots,k\}}^{<2k} \otimes
V_{\{k+1,\ldots,m\}} \oplus V_{\{1,\ldots,k\}} \otimes
V_{\{k+1,\ldots,m\}}^{<2m-2k}.
\end{equation}

Let $4d \geq 2m$. We have to show the following: if a homogeneous
symmetric polynomial $p(x_1 \ldots , x_m)$ of degree $4d$ lies in
the intersection \ref{bigintersection}, then $p$ must be a power
sum, i.e. a multiple of $\ph_{4d}$.

Let $\cP$ is the set of all partitions of the set $\underline{m}$
into two parts, $\underline{m}= S_1 \coprod S_2$. A \emph{symmetric}
polynomial $p$ lies in the intersection \ref{bigintersection} if and
only if it lies in

\begin{equation}\label{defineU}
U =\bigcap_{P \in \cP}  V_{S_1}^{<2|S_1|} \otimes V_{S_2} \oplus
V_{S_1} \otimes V_{S_2}^{<2|S_2|}.
\end{equation}

Clearly, each of the spaces in \ref{defineU} whose intersection is
$U$ is spanned by monomials. Therefore $U$ is spanned by monomials,
too. Therefore, the space $U$ has the following property: if $p \in
U $ is written as a linear combination of monomials $p=\sum_{i} a_i
p_i$ with pairwise distinct monomials $p_i$ and $0 \neq a_i \in
\bQ$, then $p_i \in U$.

We call the monomials of the form $x_{i}^{j}$ \emph{pure} and all
the other ones \emph{impure}. We will show that any monomial in $U$
is pure. This finishes the proof because then any symmetric $p \in
U$ must be a linear combination of pure monomials and the symmetry
forces $p$ to be a power sum.

Let now $p$ be an impure monomial of degree $4d \geq 2m$. We want to
show that $p$ does not lie in $U$. Without loss of generality
(symmetry!), we can assume that $p=x_{1}^{d_1} \ldots x_{k}^{d_k}$
and $0 < d_k \leq d_j$ for all $j = 1,\ldots,k$. Note that $k \geq
2$ since $p$ is impure. There are three cases to distinguish.

\begin{itemize}
\item If $k =m$, then $4(d-d_k)=4(d_1 + \ldots + d_{k-1}) \geq 4 (k-1)d_k \geq 4 (k-1) \geq
2(m-1)$. Then $f$ is not contained in the space
\[
V_{\{1,\ldots,m-1\}}^{<2(m-1)}  \otimes  V_{\{m\}} \oplus V_{\{1,
\ldots,m-1\}} \otimes V_{\{m\}}^{<2}
\]
and hence not in $U$.
\item If $k < m$ and $2d_k \geq m-k$, then $4(d-d_k) \geq 4 (k-1)
d_k \geq 2 (k-1)(m-k) \geq 2(k-1)$. Then $f$ is not contained in
\[
V_{\{1, \ldots k-1\}}^{<2(k-1)}  \otimes  V_{\{k, \ldots ,m\}}
\oplus V_{\{1, \ldots,k-1\}} \otimes V_{\{k, \ldots m\}}^{<2 (m-k)}
\]
and hence not in $U$.
\item If $k < m$ and $2d_k < m-k$. Put $e = 2d_k$. Then $2d_k \geq
e$ and $2(d-d_k) = 2d -e \geq m-e$ and thus $f$ does not lie in
\[
\begin{split}
V_{\{1, \ldots k-1, k+1, \ldots m-e+1 \}}^{<2(m-e)}
\otimes V_{\{k, m-e+2 ,\ldots ,m\}} \oplus \\
V_{\{1, \ldots k-1, k+1, \ldots m-e+1 \}} \otimes  V_{\{k, m-e+2
,\ldots ,m\}}^{<2e}
\end{split}
\]
and hence it is not in $U$ either.
\end{itemize}
\end{proof}

To show the second part of Lemma \ref{symmetricfunctionlemma}, we
need another lemma.

\begin{lem}\label{powerseries}
Let $f = \sum_{k=0}^{\infty} f_k x^k \in 1 + x \bQ [[x]]$ be a power
series such that $f_k \neq 0$ for all $k$. Let $F = \sum_{i \geq 0}
F_i$ be the corresponding multiplicative sequence. Let $m \geq 3$
and let $h( x_1, \ldots, x_m)$ be a symmetric homogeneous polynomial
of degree $d$. Assume that

\[
h(x_1, \ldots , x_m) = \sum_{i=0}^{d} a_i x_{m}^{i} F_{d-i}
(x_1,\ldots , x_{m-1}).
\]

Then $h(x_1, \ldots , x_m) = a_0 F_d (x_1,\ldots ,x_m)$.
\end{lem}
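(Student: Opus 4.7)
The plan is to exploit the symmetry of $h$ under the transposition of $x_{m-1}$ and $x_m$. Applying this permutation to the given expansion
\[
h = \sum_{i=0}^d a_i\, x_m^i\, F_{d-i}(x_1,\ldots,x_{m-1})
\]
produces a second expression $h = \sum_i a_i\, x_{m-1}^i\, F_{d-i}(x_1,\ldots,x_{m-2},x_m)$. I would then use the multiplicativity identity
\[
F_{d-i}(x_1,\ldots,x_{m-1}) = \sum_{j=0}^{d-i} f_j\, x_{m-1}^j\, F_{d-i-j}(x_1,\ldots,x_{m-2})
\]
(and its analogue with $x_{m-1}$ and $x_m$ exchanged) to rewrite both expressions as polynomials in the two distinguished variables $x_{m-1},x_m$ with coefficients in $\bQ[x_1,\ldots,x_{m-2}]$. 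After relabelling summation indices, their difference becomes
\[
\sum_{i+j \leq d} (a_i f_j - a_j f_i)\, x_m^i\, x_{m-1}^j\, F_{d-i-j}(x_1,\ldots,x_{m-2}) = 0.
\]

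Since the monomials $x_m^i x_{m-1}^j$ are linearly independent over $\bQ[x_1,\ldots,x_{m-2}]$, I can read off the coefficient relation $(a_i f_j - a_j f_i)\, F_{d-i-j}(x_1,\ldots,x_{m-2}) = 0$ for each pair $(i,j)$ with $i+j \leq d$. The crux of the argument is then to show that $F_k(x_1,\ldots,x_{m-2})$ is a \emph{nonzero} polynomial for every $k \geq 0$: since $F$ is the multiplicative sequence attached to $f$, the coefficient of the pure monomial $x_1^k$ in $F_k$ equals $f_k$, which is nonzero by hypothesis. This is precisely where the assumption $m \geq 3$ enters, ensuring that the ``small'' ring $\bQ[x_1,\ldots,x_{m-2}]$ contains at least one variable. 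Hence $a_i f_j = a_j f_i$ for every $i+j \leq d$; setting $j = 0$ and using $f_0 = 1$ gives $a_i = a_0 f_i$ for all $0 \leq i \leq d$. Substituting back into the expansion of $h$ and applying multiplicativity one final time yields $h = a_0\, F_d(x_1,\ldots,x_m)$.

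The whole argument is essentially bookkeeping and coefficient-matching; the only conceptual point worth highlighting is the role of the hypothesis $f_k \neq 0$. It is not needed to formulate the statement, but without it the polynomials $F_k$ could vanish on $m-2$ variables and one could no longer pass from $(a_i f_j - a_j f_i)\, F_{d-i-j} = 0$ to $a_i f_j = a_j f_i$. I do not anticipate a serious obstacle: the numerical non-vanishing of the $f_k$ translates directly into the polynomial non-vanishing of $F_k$ on a single variable, which is precisely what the coefficient comparison needs.
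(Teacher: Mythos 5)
Your proof is correct and follows essentially the same route as the paper's: expand both $F_{d-i}(x_1,\ldots,x_{m-1})$ and the symmetrized version in $x_{m-1}$ and $x_m$ via multiplicativity, compare coefficients of $x_m^i x_{m-1}^j$ to get $a_i f_j = a_j f_i$, then set $j=0$ and resum. You are in fact slightly more careful than the paper on one small point: you justify that $F_k(x_1,\ldots,x_{m-2})$ is a nonzero polynomial by noting its $x_1^k$-coefficient is $f_k$, and you correctly identify $m\geq 3$ (so that $m-2\geq 1$) as the hypothesis this uses, whereas the paper's parenthetical remark says "$m\geq 2$", which is slightly imprecise.
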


\begin{proof}
The assumption that $f_k \neq 0$ implies that $F_i$ is nonzero (for
all $i$ and an arbitrary positive number of variables). By definition of
multiplicative sequences, we can write

\[
h (x_1,\ldots ,x_m) = \sum_{k=0}^{d} \sum_{i+j=k} a_i f_j x_{m}^{i}
x_{m-1}^{j} F_{d-k} (x_1,\ldots ,x_{m-2})
\]

and by symmetry we also have

\[
h (x_1,\ldots ,x_m) = \sum_{k=0}^{d} \sum_{i+j=k} a_i f_j
x_{m-1}^{i} x_{m}^{j} F_{d-k} (x_1,\ldots ,x_{m-2}).
\]

Therefore $a_i f_j = a_j f_i$ for all $0 \leq i+j \leq d$
(here the assumption that $m \geq 2$ is essential). Thus $a_j = a_j
f_0 = f_j a_0$ and hence

\[
h(x_1, \ldots , x_m) = a_0 \sum_{i=0}^{d} f_i x_{m}^{i} F_{d-i}
(x_1,\ldots , x_{m-1}) =a_0 F_d (x_1,\ldots ,x_m).
\]

\end{proof}

\begin{proof}[Proof of Lemma \ref{symmetricfunctionlemma}, Part 2]
The space $\ker (\tilde{r}_{2m_1, 2m_2+1}) \subset \sigma^{-n}
\Pont^* (2m+1)$ is the sum of the space $\ker (r_{2m_1, 2m_2 + 1})$
(which was computed in the proof of the first part) and the space
$\bQ[x_1, \ldots, x_{m_1}] \otimes L(2m_2 +1)$. There is an
inclusion relation $\bQ[x_1] \otimes L(2m-1) \subset \bQ[x_1,\ldots,
x_{m_1}] \otimes L(2m_2 +1)$ for all $m_1 \geq 1$. Thus the
intersection agrees with the smallest space, i.e. $\bQ[x_1] \otimes
L(2m-1)$. A homogeneous symmetric polynomial $ f \in \bQ[x_1]
\otimes L(2m-1)$ of degree $4d$ can be written as $f(x_1,\ldots
,x_m) = \sum_{i=0}^{d} a_i x_{1}^{i} \cL_{4i} (x_2,\ldots ,x_m)$
with $a_i \in \bQ$. By Lemma \ref{powerseries}, $f$ is a multiple of
the Hirzebruch class (here we use that the coefficients of the power
series \ref{deflclass} are nonzero).
\end{proof}

\begin{proof}[Proof of Proposition \ref{inductionstep}]
Let $f_i:E_i \to B_i$, $i=1,2$, be two oriented fibre bundles of
fibre dimension $n_i > 0$ with $n_1 + n_2 =n$. Consider $f = f_1
\times f_2: E = E_1 \times E_2 \to B = B_1 \times B_2$, which is an
oriented fibre bundle of fibre dimension $n$. The umkehr
homomorphism is compatible with products in the sense that

\begin{equation}
(f_1 \times f_2)_{!} (x_1 \times x_2) = (f_1)_{!} (x_1) \times
(f_2)_{!} (x_2)
\end{equation}

for all $x_i \in H^* (E_i)$ (the signs are all $+1$ since $x_i$ has
even degree). Therefore the diagram

\begin{equation}
\xymatrix{ \sigma^{-n} \Pont^* (n) \ar[r]^{\kappa^{n,0}}
\ar[d]^{r_{n_1, n_2}} &
H^*(\cB_{n}^{0}) \ar[d]\\
\sigma^{-n_1} \Pont^* (n_1) \otimes \sigma^{-n_2} \Pont (n_2)
\ar[r]^-{\kappa^{n_1,0} \otimes  \kappa^{n_2,0}} &
H^*(\cB_{n_1}^{0}) \otimes  H^*(\cB_{n_2}^{0})\\
}
\end{equation}

is commutative; the left-hand side vertical map is induced by the
Whitney sum and the right hand side vertical map is induced by
taking product bundles. A straightforward application of Lemma
\ref{symmetricfunctionlemma} completes the proof.
\end{proof}

\section{The loop space construction}\label{loopspacesection}

\subsection*{The loop space construction}

Let $M$ be an oriented closed $n$-manifold and $f:E \to X$ a smooth
oriented $M$-bundle. Let $LX$ be the free loop space of $X$ and let
$\ev: \bS^1 \times LX \to X$ be the evaluation map
$\ev(t,\gamma):=\gamma(t)$. Moreover, $\eta: LX \to X$ is defined by
$\eta(\gamma)=\ev(1,\gamma)$. Recall that the loop space
construction $\fL p = \proj \circ f \dash$ is defined by the diagram
\ref{loopspaceconstruction}:

\begin{equation}
\xymatrix{
\fL E:= \bS^1 \times LX \times_{X} E \ar[d]^{ f \dash} \ar[r]^-{h}   &  E \ar[d]^{f}\\
 \bS^1 \times LX \ar[r]^{\ev} \ar[d]^{\proj} &  X  \\
LX .  &   \\
}
\end{equation}

\subsection*{Relation to loop groups}

There is an alternative view on the loop space construction which
might be illuminating though we do not need it in the sequel.

Let $G$ be a topological group and let $M$ be an oriented closed
$n$-manifold with a $G$-action. Let $f:E= E(G;M):=EG \times_G M \to
X=BG$ be the Borel construction, an oriented $M$-bundle.
The loop group $LG$ (multiplication is defined pointwise) acts on
$\bS^1 \times M$ by the formula

\[
\gamma \cdot (t,m):= (t, \gamma (t) m),
\]

where $\gamma \in LG$, $t \in \bS^1$, $m \in M$. Thus we get an
induced $\bS^1 \times M$-bundle $q: E(LG; \bS^1 \times M) \to BLG$.

Given an $LG$-principal bundle $Q \to X$, then $(Q \times \bS^1
\times G) / \sim \to \bS^1 \times X$, where $(q,t,g)\sim (q \gamma,
t, \gamma(t)g)$ for $\gamma \in LG$, is a $G$-principal bundle. In
the universal case $X = BLG$, the classifying map of this bundle is
a map $\phi: BLG \to LBG$, which is a homotopy equivalence if $G$ is
connected. It is not hard to see that there is a pullback-diagram

\[
\xymatrix{
E(LG ; \bS^1 \times M) \ar[r] \ar[d]^{q} & \fL E(G;M) \ar[d]^{\fL p}\\
 BLG \ar[r]^{\phi} &  L BG.
}
\]

\subsection*{Characteristic classes of the loop space construction}

Let us compute the generalized MMM-classes of the bundle $\fL f: \fL
E \to LX$ of \ref{loopspaceconstruction} in terms of those of the
original bundle $f: E \to X$. Let $x \in \Pont^* (n+1)$. We denote
the vertical tangent bundles by $T^{\fL f}$ and $T^f$ in
self-explaining notation. The vertical tangent bundle of $\fL f =
\proj \circ f \dash$ is seen to be isomorphic to $(f \dash)^*
T^{\proj} \oplus T^{f \dash} \cong \bR \oplus h^{*} T^{f}$.
Therefore

\begin{equation}\label{mmmclassesofloopspace1}
(\fL f)_{!} (x(T^{\fL f}) = (\fL f)_{!} (x (h^{*} T^{f})) =
\proj_{!} f^{\prime}_{!} (x (h^{*} T^{f})) = \proj_{!}
f^{\prime}_{!} h^* (x ( T^{f})) = \proj_{!} \ev^* f_{!} h^* (x (
T^{f})),
\end{equation}

using the naturality of the umkehr map. The first equation is true
because $x \in \Pont^* (n+1)$. We can rephrase this formula using
the notion of the transgression homomorphism.

\begin{defn}\label{deftransgress}
Let $X$ be a space, $LX$ its free loop space, $\ev: \bS^1 \times LX
\to X$ the evaluation map and $\proj: \bS^1 \times LX \to LX$ be the
projection onto the first factor. The \emph{transgression} is the
homomorphism

\[
\trg := \proj_{!} \circ \ev^*: H^* (X) \to H^{*-1} (LX).
\]
\end{defn}

Formula \ref{mmmclassesofloopspace1} becomes:

\begin{prop}\label{mmmclassesofloopspace2}
The diagram
\begin{equation}
\xymatrix{
\Pont^{k} (n+1) \ar[r] \ar[d]^{\kappa_{\fL E}} & \Pont^{k} (n) \ar[d]^{\kappa_{E}} \\
H^{k-n-1} (LX)   & H^{k-n} (X). \ar[l]_-{\trg} }
\end{equation}
is commutative.
\end{prop}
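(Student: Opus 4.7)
The plan is to prove commutativity by directly computing $\kappa_{\fL E}(x) = (\fL f)_!(x(T^{\fL f}))$ for $x \in \Pont^*(n+1)$ and matching it with $\trg(\kappa_E(\bar x))$, where $\bar x \in \Pont^*(n)$ is the image of $x$ under the restriction map $H^*(BSO(n+1)) \to H^*(BSO(n))$. The entire content is already essentially contained in formula \ref{mmmclassesofloopspace1}; the proposition is simply its reformulation in terms of the transgression.

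The first step is to identify $T^{\fL f}$. Since $\fL f = \proj \circ f'$ factors as a composition of submersions, there is a short exact sequence of vertical tangent bundles, and since the fibre of $\proj: \bS^1 \times LX \to LX$ is $\bS^1$, I obtain an isomorphism $T^{\fL f} \cong (f')^* T^{\proj} \oplus T^{f'} \cong \bR \oplus h^* T^f$, using that $T^{\proj}$ is canonically trivial and that $f'$ is the pullback of $f$ along $\ev$, hence $T^{f'} \cong h^* T^f$.

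The second step uses the hypothesis $x \in \Pont^*(n+1)$ in a crucial way. Pontrjagin classes are stable: $p_i(V \oplus \bR) = p_i(V)$, so for any polynomial $x$ in the Pontrjagin classes one has $x(\bR \oplus h^* T^f) = \bar x(h^* T^f)$, where $\bar x$ denotes $x$ read as a polynomial in $p_1, \ldots, p_{\lfloor n/2 \rfloor}$ rather than $p_1, \ldots, p_{\lfloor (n+1)/2 \rfloor}$ (i.e., the image under the top horizontal map in the diagram).

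The third and final step is to pull apart the Gysin map using functoriality and base change. Functoriality gives $(\fL f)_! = \proj_! \circ (f')_!$. Because the top square in \ref{loopspaceconstruction} is a pullback of $f$ along $\ev$, the base-change formula for the Gysin map (recorded in the appendix) yields $(f')_! \circ h^* = \ev^* \circ f_!$. Chaining these together,
\[
\kappa_{\fL E}(x) \;=\; (\fL f)_!\bigl(\bar x(h^* T^f)\bigr) \;=\; \proj_!\,(f')_!\,h^*\bigl(\bar x(T^f)\bigr) \;=\; \proj_!\,\ev^*\bigl(f_!(\bar x(T^f))\bigr) \;=\; \trg\bigl(\kappa_E(\bar x)\bigr),
\]
which is the claimed commutativity.

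The only step that requires nontrivial input is the base-change identity $(f')_! \circ h^* = \ev^* \circ f_!$; everything else is a direct calculation. Once base change is granted, the argument is formal. Since the author has deferred properties of the Gysin homomorphism to the appendix, I would simply cite that result.
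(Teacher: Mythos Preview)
Your proof is correct and follows exactly the paper's own argument: identify $T^{\fL f}\cong \bR\oplus h^*T^f$, use stability of Pontrjagin classes to drop the trivial summand, then apply transitivity $(\fL f)_!=\proj_!\circ f'_!$ together with the base-change identity $f'_!\circ h^*=\ev^*\circ f_!$ from Proposition~\ref{gysinproperties1}. You have correctly recognised that the proposition is nothing more than a restatement of formula~\ref{mmmclassesofloopspace1} using Definition~\ref{deftransgress}.
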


The usefulness of the above construction stems from the fact that
the transgression is injective in some cases, which we
will explain now.

\begin{prop}\label{transgressinjective}
Let $X$ be a simply-connected space such that $H^{*} (X; \bQ)\cong
\Lambda V$ is a free graded-commutative algebra on a
finite-dimensional graded vector space $V$. Then $H^*(LX; \bQ)$ is a
free-graded commutative algebra on a finite-dimensional vector space
as well and the transgression homomorphism $\tilde{H}^{*} (X; \bQ)
\to H^{*-1} (LX; \bQ)$ is injective.
\end{prop}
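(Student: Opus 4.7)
The plan is to leverage rational homotopy theory. Since $H^*(X;\bQ) = \Lambda V$ is a free graded-commutative algebra on $V$ concentrated in positive degrees and $X$ is simply-connected, the space $X$ is formal, and $(\Lambda V, 0)$ is its minimal Sullivan model. I would then invoke the Vigu\'e-Poirrier--Sullivan model of the free loop space: for any minimal model $(\Lambda V, d)$ of $X$, a Sullivan model for $LX$ is $(\Lambda V \otimes \Lambda sV, D)$, where $sV$ is the desuspension of $V$ (the same graded vector space with all degrees lowered by one) and $D$ is a derivation extending $d$. In our formal situation $d = 0$, hence $D = 0$, and so
\[
H^*(LX;\bQ) \cong \Lambda V \otimes \Lambda sV = \Lambda(V \oplus sV),
\]
which is free graded-commutative on the finite-dimensional graded vector space $V \oplus sV$. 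This yields the first assertion of the proposition.

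Next I would identify the transgression in this model. For $x \in H^*(X;\bQ)$, decompose
\[
\ev^*(x) = 1 \otimes \eta^*(x) + \omega \otimes \trg(x) \in H^*(\bS^1;\bQ) \otimes H^*(LX;\bQ),
\]
where $\omega \in H^1(\bS^1;\bQ)$ is the fundamental class; this is just the K\"unneth formula combined with the definitions of $\eta$ and $\trg$. Multiplying two such decompositions using graded-commutativity (noting $\omega^2 = 0$) and extracting the $\omega$-coefficient shows that $\trg$ is a graded $\eta^*$-derivation. Under the identification $H^*(LX) = \Lambda V \otimes \Lambda sV$, the map $\eta^*$ is the standard inclusion $\Lambda V \hookrightarrow \Lambda V \otimes \Lambda sV$, so $\trg$ is determined by its values on the generators $v \in V$. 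On these generators one has $\trg(v) = sv$; this is built into the Vigu\'e-Poirrier--Sullivan model and can be verified on the basic example $X = K(\bQ,n)$, for which $LX \simeq_\bQ K(\bQ,n) \times K(\bQ,n-1)$ and $\trg$ sends the fundamental class of $K(\bQ,n)$ to that of $K(\bQ,n-1)$.

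For injectivity on $\tilde H^*(X;\bQ)$, fix a homogeneous basis $v_1,\ldots,v_r$ of $V$. The derivation rule gives, up to signs irrelevant for the present purpose, $\trg(p) = \sum_{\alpha}(\partial p/\partial v_\alpha)\, sv_\alpha \in \Lambda V \otimes sV$ for every $p \in \Lambda V$. Since $\Lambda V \otimes sV$ is a free $\Lambda V$-module on the basis $\{sv_\alpha\}$, vanishing of $\trg(p)$ forces each partial derivative $\partial p/\partial v_\alpha$ to vanish; as $V$ lies in strictly positive degrees and $\Lambda V$ is the free graded-commutative $\bQ$-algebra on $V$, this forces $p$ to be a scalar, so $p = 0$ in $\tilde H^*(X;\bQ)$. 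The main obstacle I anticipate is the identification of $\trg$ on generators, which is a compatibility check between the geometric definition of the transgression and the standard conventions of the Vigu\'e-Poirrier--Sullivan model; once this is settled, the rest is pure algebra on free graded-commutative algebras.
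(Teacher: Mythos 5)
Your proof is correct, and it reaches the same endpoint by a genuinely different route. The paper avoids Sullivan models entirely: it observes that $X$ is rationally a product of Eilenberg--Mac~Lane spaces $K(V\dual)$, that for such spaces $LX$ is a product $\Omega X \times X$ (because $K(V\dual)$ is an abelian topological group, so the evaluation fibration splits), and it then identifies $\trg$ on generators using the classical fact that $\inc^* \circ \trg : H^*(X) \to H^{*-1}(\Omega X)$ sends $V$ onto a generating subspace of $H^*(\Omega X)$. You instead invoke the Vigu\'e-Poirrier--Sullivan model $(\Lambda V \otimes \Lambda sV, D)$, observe that formality kills $D$, and read off freeness and the generators of $H^*(LX)$. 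Both routes then establish the same derivation formula $\trg(xy) = \pm\,\eta^*x\,\trg(y) + \trg(x)\,\eta^*y$ by the same K\"unneth computation on $\bS^1 \times LX$, and both close with the same algebraic step (your $\partial p/\partial v_\alpha$ computation is exactly the paper's formula for $\trg(x_1^{m_1}\cdots x_n^{m_n})$ in disguise). The one place where your argument leans on machinery that would need to be unpacked is the claim $\trg(v) = sv$, which you correctly flag: from the derivation formula and degree reasons alone one does not immediately see that $\trg(v)$ lies in $sV$ rather than in some product of generators of total degree $|v|-1$, so the compatibility between the geometric transgression and the algebraic suspension in the VPS model is a genuine input. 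The paper sidesteps this by appealing to the analogous classical statement for $\Omega X$, which buys elementarity at the cost of the explicit reduction to $K(V\dual)$; your version buys conceptual clarity (formality implies the answer) at the cost of invoking the VPS model and its relation to $\trg$.
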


\begin{proof}
Of course, the transgression is not a ring homomorphism. Instead,
the following product formula holds ($\eta: LX \to X$ is the
evaluation at the basepoint):

\begin{equation}\label{transgressproduct}
\trg (x_1 x_2) = (-1)^{|x_1|} \eta^* x_1 \trg(x_2) +  \trg (x_1)
\eta^* x_2.
\end{equation}

This is shown as follows. Let $u \in H^1 (\bS^1) $ be the standard
generator. Write $\ev^* x_i = 1 \times a_i + u \times b_i \in H^*
(LX \times \bS^1$ for some $a_i,b_i \in H^* (LX)$. Then $\trg (x_i)
= \proj_{!} \ev^* x_i = b_i$ and $\eta^* x_i = a_i$. Formula
\ref{transgressproduct} follows in a straightforward manner from
Proposition \ref{gysinproperties2} (1).

Let $K(V \dual) = \prod_{k} K(V_{k}\dual;k)$ be the graded
Eilenberg-Mac Lane space. There is a tautological map $s: X \to
K(V\dual)$ which induces an isomorphism in rational cohomology
because the cohomology algebra of $X$ is free graded-commutative.
Therefore $s$ is a rational homotopy equivalence. Because $\pi_1 (X)
=0$, $LX$ is simple and there is a rational homotopy equivalence
$(LX)_{\bQ} \simeq L (X_{\bQ}) \simeq L (K(V\dual))$. The diagram

\[
\xymatrix{
H^* (X; \bQ)\ar[r]^-{\trg} & H^{*-1} (LX;\bQ)\\
H^* (K(V\dual); \bQ)\ar[r]^-{\trg} \ar[u]^{s^*} & H^{*-1} (LK(V\dual);\bQ) \ar[u]^{Ls^*}\\
}
\]

is commutative and the vertical arrows are isomorphisms. Thus we can
assume that $X = K(V\dual)$.

The map $\eta^* \oplus \trg :V \oplus \sigma^{-1} V \to H^*(LX)$
induces an algebra map $\tau: \Lambda (\eta^* V \oplus \trg (V)) \to
H^ {*} (LX)$, which is an isomorphism by the following argument.
Since a product of Eilenberg-Mac-Lane spaces is an abelian
topological group, the fibration $\Omega X \stackrel{\inc}{\to} LX
\stackrel{\eta}{\to} X$ is a product and thus it has a retraction
$r: LX \to \Omega X$. The maps $\eta^* $ and $r^*$ induce an
isomorphism $H^* (LX) \cong H^* (\Omega X) \otimes H^* (X)$.
Moreover, it is well-known that the composition $H^* (X)
\stackrel{\trg}{\to} H^{*-1} (LX) \stackrel{\inc^*}{\to} H^{*-1}
(\Omega X)$ maps $V$ to a generating subspace of the target. It
follows that $\tau$ is an epimorphism which has to be an isomorphism
by a dimension count.

A straightforward application of \ref{transgressproduct} completes
the proof: let $x_1, \ldots , x_n$ be a homogeneous basis of $V$,
$y_i := \eta^* x_i$. From \ref{transgressproduct}, one derives the
identity

\[
\trg (x_{1}^{m_1} \ldots x_{n}^{m_n}) = \sum_{i =0}^{n} m_i y_1
\ldots y_{i-1}^{m_{i-1}} y^{m_i -1}_{i} \trg(x_i) y_{i+1}^{m_{i+1}}
\ldots y_{n}^{m_n}
\]

which implies that $\trg$ is injective because the terms on the
right hand side are all linearly independent.
\end{proof}

\begin{lem}
Let $G$ be a simply connected compact Lie group. Then the spaces
$BG$, $LBG$, $L^2 BG$ satisfy the assumptions of Proposition
\ref{transgressinjective}.
\end{lem}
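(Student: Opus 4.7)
The plan is to combine Borel's classical computation of $H^*(BG;\bQ)$ with an iterated application of Proposition \ref{transgressinjective}.

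For the base case $X = BG$, Borel's theorem asserts that $H^*(BG;\bQ)$ is a polynomial algebra $\bQ[y_1,\ldots,y_r]$ on finitely many generators of even degree; since the generators lie in even degree, this is exactly the free graded-commutative algebra $\Lambda V$ on the finite-dimensional graded vector space $V=\spann(y_1,\ldots,y_r)$. Simple connectivity of $BG$ follows from $\pi_1(BG)=\pi_0(G)=0$; in fact, since $\pi_2(G)=0$ for any Lie group and $\pi_1(G)=0$ by hypothesis, $BG$ is even $3$-connected. So the hypotheses of Proposition \ref{transgressinjective} are satisfied for $BG$.

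For $LBG$ and $L^2 BG$ I would argue by induction. The cohomological hypothesis is precisely the conclusion of Proposition \ref{transgressinjective}: one application to $X=BG$ gives it for $LBG$, and a second application to $X=LBG$ (once that case is verified) gives it for $L^2 BG$. For simple connectivity I use that whenever $X$ is simply connected, the fibration $\Omega X \to LX \to X$ admits a section by constant loops, so its long exact sequence splits and yields $\pi_k(LX) \cong \pi_k(X) \oplus \pi_{k+1}(X)$ for all $k \geq 1$. Applied to $X=BG$, the vanishing of $\pi_k(BG)$ for $k\leq 3$ gives $\pi_1(LBG) = \pi_2(LBG) = 0$; applied again to $X=LBG$ this now yields $\pi_1(L^2 BG)=0$.

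No step presents a real obstacle: the essential input is Borel's theorem on $H^*(BG;\bQ)$, and the rest is a two-step inductive bookkeeping exercise on top of the self-improving nature of Proposition \ref{transgressinjective}.
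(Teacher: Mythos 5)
Your proof is correct and follows essentially the same route as the paper: invoke Borel's theorem for $H^*(BG;\bQ)$, observe that $\pi_1(G)=\pi_2(G)=0$ makes $BG$ three-connected, and then bootstrap through Proposition \ref{transgressinjective} (whose conclusion supplies the freeness of $H^*(LX;\bQ)$) together with the drop in connectivity under $L$ to handle $LBG$ and $L^2BG$. The paper states the connectivity step more tersely ($BG$ three-connected implies $LBG$, $L^2BG$ simply connected); your explicit splitting $\pi_k(LX)\cong\pi_k(X)\oplus\pi_{k+1}(X)$ is just a more detailed way of saying the same thing.
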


\begin{proof}
The case of $BG$ is a well-known result generally attributed to
Borel. Since $BG$ is $3$-connected, the spaces $LBG$ and $L^2 BG$
are simply connected and therefore the first half of the statement
of Proposition \ref{transgressinjective} can be applied.
\end{proof}

\section{Computations for $\cp^m$-bundles}\label{cpnsection}

Let $V \to X$ be an $(m+1)$-dimensional hermitian complex vector
bundle. Let $q: \bP (V) \to X$ be the projective bundle of $V$ (its
fibre is $\cp^m$ and its structural group is $\bP U(m+1)$. The
finite isogeny $SU(m+1) \to \bP U (m+1)$ induces a rational homotopy
equivalence $BSU(m+1) \to B \bP U (m+1)$. Therefore we conclude

\begin{lem}\label{ignorefirstchern}
Any characteristic class of $\cp^m$-bundles with structural group
$\bP U(m+1)$ is a polynomial in the Chern classes (recall our
indexing convention for characteristic classes) $c_4, c_6, \ldots ,
c_{2m+2}$ (i.e.: the first Chern class $c_2$ does not occur).
\end{lem}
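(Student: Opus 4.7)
The plan is direct: the lemma reduces to identifying $H^*(B\bP U(m+1); \bQ)$ as a polynomial algebra on classes in degrees $4, 6, \ldots, 2m+2$. Since characteristic classes of $\cp^m$-bundles with structural group $\bP U(m+1)$ are by definition elements of $H^*(B\bP U(m+1); \bQ)$, once this polynomial ring is identified the statement becomes essentially tautological.

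The rational homotopy equivalence $BSU(m+1) \to B\bP U(m+1)$ recorded immediately before the lemma (from the finite isogeny with kernel the center $\bZ/(m+1)$) provides an isomorphism
\[
H^*(B\bP U(m+1);\bQ) \xrightarrow{\cong} H^*(BSU(m+1);\bQ),
\]
so it suffices to compute the right-hand side. For this I would apply Borel's classical theorem: the maximal torus $T \subset SU(m+1)$ of diagonal unitary matrices of determinant one has Weyl group $\Sigma_{m+1}$ acting by permutation of the coordinates, so that
\[
H^*(BSU(m+1);\bQ) \cong H^*(BT;\bQ)^{W} = \bQ[x_1,\ldots,x_{m+1}]^{\Sigma_{m+1}}/(x_1 + \cdots + x_{m+1}),
\]
with $|x_j|=2$. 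The elementary symmetric polynomials $\sigma_k(x_1,\ldots,x_{m+1})$ are, in the paper's indexing convention, the Chern classes $c_{2k}$ of the universal $SU(m+1)$-bundle, and the defining relation $\sigma_1 = 0$ is exactly $c_2 = 0$. Hence
\[
H^*(BSU(m+1);\bQ) \cong \bQ[c_4, c_6, \ldots, c_{2m+2}].
\]
Pulling these generators back along a rational homotopy inverse of $BSU(m+1) \to B\bP U(m+1)$ gives the promised generators of $H^*(B\bP U(m+1);\bQ)$.

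No genuine obstacle is anticipated; the argument is entirely standard. The only subtlety worth flagging is that these ``Chern classes'' on $B\bP U(m+1)$ live only rationally---no complex vector bundle on $B\bP U(m+1)$ realizes them, since $\bP U(m+1)$ does not lift to $U(m+1)$---and the omission of $c_2$ reflects the vanishing of the determinant, equivalently the finiteness of the center of $SU(m+1)$. Note that one could equally well derive the result from the fibration $BSU(m+1) \to BU(m+1) \to B U(1) = \cp^\infty$, which over $\bQ$ exhibits $H^*(BSU(m+1);\bQ)$ as the quotient $\bQ[c_2,c_4,\ldots,c_{2m+2}]/(c_2)$; both routes lead to the same conclusion.
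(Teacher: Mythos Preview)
Your proposal is correct and follows exactly the paper's approach: the paper's entire argument is the one sentence preceding the lemma (the finite isogeny $SU(m+1)\to\bP U(m+1)$ induces a rational homotopy equivalence $BSU(m+1)\to B\bP U(m+1)$), after which the well-known description $H^*(BSU(m+1);\bQ)\cong\bQ[c_4,\ldots,c_{2m+2}]$ gives the result. You have simply supplied the standard details of that last identification.
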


We will use this Lemma in section \ref{holomorphic}. From now on, we
restrict our attention to hermitian vector bundles with trivialized
determinant and Lemma \ref{ignorefirstchern} tells us that we do not
loose anything. There is a tautological complex line bundle $L_V \to
\bP V$ and the first Chern class of $L_{V}\dual$ is denoted by $z_V
\in H^2 (\bP (V))$. There is a natural isomorphism

\begin{equation}\label{tangentprojective}
T_v \bP(V) \oplus \bC \cong q^* V \otimes  L_{V}\dual.
\end{equation}

Because $\cp^m = SU(m+1) / S(U(1) \times U(m))$, $S(U(1) \times
U(m)) = SU(n+1) \cap U(1) \times U(n)$, we can identify the total
space of the universal bundle $E(SU(m+1),\cp^m)$ with $B(S(U(1)
\times U(m)))$. Under this identification, the classifying map of
the vertical tangent bundle corresponds to the map induced by the
group homomorphism

\begin{equation}\label{character}
S(U(1) \times U(m)) \to U(m) \subset SO(2m); \; \begin{pmatrix} z &
0
\\ 0 & A
\end{pmatrix} \mapsto z^{-1} A.
\end{equation}

\subsection*{The Pontrjagin character for $\cp^m$-bundles}

In principle, the MMM-classes of the universal $\cp^m$-bundle
$E(SU(m+1),\cp^m)$ were computed in Hirzebruch's lecture notes
\cite{HBJ}. However, the formula that appears there is not
appropriate to show Theorems \ref{hirzebruch1} and
\ref{hirzebruch2}. Therefore we follow another path. First we turn
to the proof of Theorem \ref{hirzebruch2}, which follows immediately
from Proposition \ref{twoplustrivial} below.

Our method is to use the Leray-Hirsch Theorem for the computation of
generalized MMM-classes. Let $V \to X$ be a complex vector bundle of
rank $m+1$, $q: \bP (V) \to X$, $L_V \to \bP (V)$ and $z_V \in H^2
(\bP (V))$ as above.

As an $H^*(X)$-algebra, $H^* (\bP V)$ is isomorphic to $H^*(X)[z_V]/
(\sum_{i} c_{2i}(V) z_{V}^{m+1-i})$. The set $\{1,z_V, \ldots ,
z_{V}^{m}\}$ is a $H^* (X)$-basis of $H^*(\bP(V))$. Moreover,
$q_{!}$ is the $H^*(X)$-linear map determined by $q_{!}
(z_{V}^{i})=0$ for $0 \leq i \leq n-1$ and $q_{!} (z_{V}^{m})=1$.
The higher powers of $z_{V}$ can be expressed explicitly in terms of
this basis. This gives an algorithm to compute $q_{!}$, which is not
very manageable in general. But it is manageable if all but one of
the Chern classes of $V$ are zero.

\begin{prop}\label{twoplustrivial}
Let $X = BSU(2)$ and $V \to X$ the universal $2$-dimensional vector
bundle. Then $H^* (BSU(2)) = \bQ [u]$ where $u \in H^4 (BSU(2))$ is
the second Chern class of $V$. Consider the projective bundle $\bP
(V \oplus \bC^{m-1}) \to X$, which is a $\cp^{m}$-bundle. Then

\[
q_{!} (\ch (T_v \bP (V \oplus \bC^{m-1})))= \sum_{p=0}^{\infty} a_p u^p,
\]

where $a_p = (-1)^p \left(\frac{m-1}{(m+2p)!} + \sum_{k + l =p}
\frac{2}{(m+2k)!!(2l)!} \right) \neq 0$.
\end{prop}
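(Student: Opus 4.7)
The plan is to compute $q_!(\ch(T_v \bP W))$ directly from equation \ref{tangentprojective} and the Leray--Hirsch description of $q_!$ recalled just before the proposition, exploiting the fact that $W := V \oplus \bC^{m-1}$ has the particularly simple total Chern class $c(W) = c(V) = 1 + u$. From \ref{tangentprojective} together with the additivity and multiplicativity of $\ch$, one has $\ch(T_v \bP W) = q^*\ch(W)\cdot e^{z} - 1$, where $z := z_W \in H^2(\bP W)$. Since $q_!(1) = 0$ for $m \geq 1$, the projection formula reduces the problem to computing the product $\ch(W)\cdot q_!(e^z) \in H^*(BSU(2))$.

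First I would expand $\ch(W)$ by the splitting principle. The Chern roots of $V$ are $\pm\alpha$ with $\alpha^2 = -u$, so
\[
\ch(V) = e^{\alpha} + e^{-\alpha} = 2\sum_{l \geq 0}\frac{(-u)^l}{(2l)!},
\]
and hence $\ch(W) = (m-1) + \ch(V)$.

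The core of the computation is $q_!(e^z)$. The key observation is that because $c(W)$ has only two nonzero homogeneous components, the Grothendieck relation collapses to the two-term recursion $z^{m+1} = -u\, z^{m-1}$ in $H^*(\bP W)$. Iterating this in the $H^*(X)$-basis $\{1, z, \ldots, z^m\}$ gives $q_!(z^{m+2k}) = (-u)^k$, $q_!(z^{m+2k+1}) = 0$, and $q_!(z^j) = 0$ for $0 \leq j < m$, so that $q_!(e^z) = \sum_{k \geq 0}(-u)^k/(m+2k)!$. Multiplying the two series and reading off the coefficient of $u^p$ then produces the formula claimed for $a_p$. Nonvanishing is immediate: every summand inside the parentheses is a strictly positive rational (for $m \geq 1$ one has $m-1 \geq 0$, and the sum over $k + l = p$ is a non-empty sum of strictly positive terms), so no cancellation can occur and $a_p \neq 0$ for all $p \geq 0$.

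The step most likely to require care is not the derivation but sign bookkeeping: pinning down $\alpha^2 = -u$ rather than $+u$, the sign in the Grothendieck relation, and the conversion between coefficients of $(-u)^p$ and $u^p$ at the end. Everything else is routine manipulation of power series once the simplification $c(W) = 1 + u$ is brought into play, and that simplification is precisely what motivates the decomposition $W = V \oplus \bC^{m-1}$ instead of working with a more general rank-$(m+1)$ bundle.
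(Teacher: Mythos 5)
Your proof is correct and matches the paper's argument essentially line for line: the same isomorphism $T_v\bP W \oplus \bC \cong q^*W \otimes L_W^{\vee}$, the same two-term Grothendieck recursion $z^{m+1} = -u\,z^{m-1}$ giving $q_!(z^{m+2k})=(-u)^k$, and the same Cauchy product of $\ch(W)=(m-1)+2\sum_l(-u)^l/(2l)!$ with $q_!(e^z)=\sum_k(-u)^k/(m+2k)!$. One remark: your derivation in fact yields $(m+2k)!$ (a single factorial) in the formula for $a_p$, not the $(m+2k)!!$ printed in the proposition --- the double exclamation mark there is a typo, and your computation gives the correct coefficient; the positivity argument for $a_p\neq 0$ is unaffected.
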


\begin{proof}
By the isomorphism \ref{tangentprojective}, we obtain

\begin{equation}\label{tangentproduct}
q_{!} (\ch (T_v \bP (V \oplus \bC^{m-1}))) = (\ch (V) + m-1) q_{!}
(\ch (L_{V \oplus \bC^{m-1}}\dual)) .
\end{equation}

When we write the total Chern class of $V$ formally as $c(V) = (1 +
x_1) (1+ x_2)$, then $x:=x_1 = -x_2 $ and $u = -x^2$. Thus $\ch (V)
= \exp (x_1) + \exp (x_2) = 2 \cosh (x) = 2 \cos (\sqrt{u})$.

Let us compute $q_{!} (\ch (L_{V \oplus \bC^{m-1}}\dual))=
\sum_{l=0}^{\infty} \frac{1}{l!} q_{!} (z_{V \oplus \bC^{m-1}}^{l}
)$. With $z := z_{V \oplus \bC^{m-1}}$, we get the algebra
isomorphism

\[
H^* (\bP (V \oplus \bC^{m-1})) \cong \bQ [u,z]/(z^{m+1}+u
z^{m-1})
\]

Therefore, for $l \geq 0$,

\[
z^{m+2l+1}= (-1)^{l+1} z^{m-1} q^* u^{l+1}; \; z^{m+2l}= (-1)^{l} z^{m} q^* u^{l}.
\]

Therefore $q_{!} (z^{m+2l+1})=0$ and $q_{!} (z^{m+2l})=(-1)^l u^l$
and

\begin{equation}\label{chern}
q_{!} (\ch (L_{V \oplus \bC^{m-1}}\dual)) = \sum_{l=0}^{\infty}
\frac{(-1)^l}{(m+2l)!} u^l.
\end{equation}

Combine \ref{chern} and \ref{tangentproduct} to finish the
proof.
\end{proof}

\subsection*{The case of $\cp^2$-bundles}

Here we show Theorem \ref{hirzebruch1}. We consider the $\cp^2$-bundle $q:B S(U(1) \times U(2)) \to BSU(3)$.

\begin{prop}\label{linkernel}
Let $\cL$ be the total Hirzebruch $\cL$-class. Then $q_{!} (\cL
(T^q)) = 1 \in H^* (BSU(3))$. In particular, $\cL_{4k}$ lies in the
kernel of $\kappa_q: \Pont^{4k} (4) \to H^{4k-4} (BSU(3))$ for all
$k \geq 2$.
\end{prop}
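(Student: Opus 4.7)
The plan is a direct computation combining the splitting principle with a Lagrange interpolation formula for the Gysin map along a projective bundle. Write $V\to BSU(3)$ for the universal rank-$3$ bundle, introduce formal Chern roots $x_1,x_2,x_3$ of $V$, and put $z=c_1(L_V\dual)\in H^{2}(\bP(V))$. The isomorphism \ref{tangentprojective}, together with multiplicativity of $\cL$ and the vanishing $\cL(\bC)=1$, gives
\[
\cL(T_v\bP(V))=\cL(q^*V\otimes L_V\dual)=\prod_{i=1}^{3} h(z+x_i),
\]
where $h(y):=y\coth y$ denotes the Hirzebruch power series, viewed as a formal power series in $y$ with $h(0)=1$.

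The second step is a Lagrange-type description of the Gysin map. The Leray-Hirsch presentation $H^*(\bP(V))\cong H^*(BSU(3))[z]/\prod_i(z+x_i)$, combined with partial fractions, yields for every polynomial $g$
\[
q_{!}(g(z))=\sum_{i=1}^{3}\frac{g(-x_i)}{\prod_{j\neq i}(x_j-x_i)},
\]
and working degree-by-degree this formula remains valid for the power series $g(z)=\prod_k h(z+x_k)$. Upon substituting $z=-x_i$, the factor indexed by $k=i$ in the numerator contributes $h(0)=1$, while the remaining factors $h(x_k-x_i)=(x_k-x_i)\coth(x_k-x_i)$ provide exactly the linear factors needed to cancel the denominator. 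The outcome collapses to the compact identity
\[
q_{!}(\cL(T^q))=\sum_{i=1}^{3}\prod_{k\neq i}\coth(x_k-x_i).
\]

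The remaining task is a hyperbolic identity. Setting $u=x_2-x_1$ and $v=x_3-x_1$, the three summands combine to $\coth u\,\coth v+\coth(v-u)\bigl(\coth v-\coth u\bigr)$. Using $\coth v-\coth u=-\sinh(v-u)/(\sinh u\sinh v)$ together with the addition formula $\cosh(v-u)=\cosh u\cosh v-\sinh u\sinh v$, this simplifies to $\sinh u\sinh v/(\sinh u\sinh v)=1$. I expect this trigonometric manipulation, rather than the formal bookkeeping, to be the only point requiring care; note that the whole computation is translation-invariant in the $x_i$'s, so the hypothesis $c_1(V)=0$ never plays an explicit role.

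The \emph{in particular} statement is then immediate: since $q_{!}(\cL(T^q))=1\in H^{0}(BSU(3))$ has no components in positive degree, the component $q_{!}(\cL_{4k}(T^q))\in H^{4k-4}(BSU(3))$ vanishes for every $k\geq 2$.
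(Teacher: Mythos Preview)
Your proof is correct. The residue/Lagrange formula for $q_{!}$ on a projective bundle is valid, the substitution $z=-x_i$ is handled properly (the factor $h(0)=1$ is exactly what makes the numerator divisible by the denominator), and the hyperbolic identity $\coth u\,\coth v+\coth(v-u)(\coth v-\coth u)=1$ is verified cleanly.

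For comparison, the paper does not carry out this computation in detail; it lists three arguments. The first is precisely a direct computation, deferred to \cite{HBJ}, p.~51~ff.; your approach is a concrete instance of this, packaged via the Lagrange interpolation formula for the Gysin map of a projective bundle. The paper's second argument applies the loop space construction to obtain an $\bS^1\times\cp^2$-bundle and invokes the vanishing Theorem~\ref{vanishing} for odd fibre dimension. The third argument uses the family index theorem: $q_{!}(\cL(T^q))$ is the Chern character of the index bundle of the fibrewise signature operator, and this bundle is trivial because $SU(3)$ acts isometrically (for the Fubini--Study metric) and is connected. Your direct computation is self-contained and avoids both the dependence on Theorem~\ref{vanishing} and the analytic input of the index theorem; the price is the small trigonometric manipulation at the end, which you handled correctly.
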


\begin{proof}
We offer three methods since they are all interesting. The first and
most elementary method is a direct computation that can be found in
\cite{HBJ}, p.51 ff.

The second method is to use the loop space construction and then the
vanishing theorem \ref{vanishing} for the resulting $\bS^1 \times
\cp^2$-bundle.

Another method comes also from index theory. By the family index
theorem, the class $q_{!} (\cL(T^q))$ agrees with the Chern character of
the index bundle of the signature operator. Since the group $SU(3)$
acts by isometries on $\cp^2$ with respect to the Fubini-Study
metric and since $SU(3)$ is connected, this index bundle is trivial.
\end{proof}

For the proof of Theorem \ref{hirzebruch1}, we will use complex
coefficients because we are going to employ Chern-Weil theory. Let
$G$ be a compact connected Lie group with maximal torus $T$ and Weyl
group $W$. Let $f: BT \to BG$ be the universal $G/T$-bundle. Let
$\ft$ be the Lie algebra of $T$ and $\fg$ be the Lie algebra of $G$.
Recall the Chern-Weil isomorphism $CW:\Sym^* (\fg\dual_{\bC})^G
\cong H^* (BG; \bC )$, which is natural in $G$. Moreover, $\Sym^*
(\fg\dual_{\bC})^G \cong \Sym^* (\ft\dual_{\bC})^W$ by restriction.
In other words, there is a commutative diagram

\[
\xymatrix{
H^* (BG; \bC) \ar[rr]^{f^*} &  &  H^{*} (BT; \bC) \\
\Sym^* (\fg\dual_{\bC})^G \ar[r]^{\cong}_{\res} \ar[u]^{CW}_{\cong}
&\Sym^* (\ft\dual_{\bC})^W \ar[r]^{\subset} & \Sym^*
(\ft\dual_{\bC}) \ar[u]^{CW}_{\cong} }.
\]

Now we express the transfer $\trf_{f}^{*}:  H^* (BT; \bC) \to H^*
(BG; \bC)$ as a map $\Sym^* (\ft\dual_{\bC}) \to \Sym^*
(\ft\dual_{\bC})^W$.

\begin{lem}\label{transferchernweil}
As a map $\Sym^* (\ft\dual_{\bC}) \to \Sym^* (\ft\dual_{\bC})^W$,
the transfer $\trf_{f}^{*}$ agrees with the averaging operator $F
\mapsto \sum_{w \in W} w^* F$.
\end{lem}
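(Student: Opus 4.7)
The plan is to factor $f: BT \to BG$ through the intermediate space $EG/N(T)$ and to compute the transfer on each factor separately. Write $f$ as the composition
\[
BT = EG/T \xrightarrow{\pi} EG/N(T) \xrightarrow{p} BG,
\]
where $\pi$ is a normal covering of degree $|W|$ with deck group $W = N(T)/T$, and $p$ is a fibre bundle with fibre $G/N(T)$. By functoriality of the Becker--Gottlieb transfer under composition of fibre bundles, $\trf_f^* = \trf_p^* \circ \trf_\pi^*$.

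For the finite Galois cover $\pi$, the classical ``sum over sheets'' formula gives $\pi^* \circ \trf_\pi^*(x) = \sum_{w \in W} w^* x$ for every $x \in H^*(BT;\bC)$. For the second factor, the fibre $G/N(T) = (G/T)/W$ is rationally acyclic: the Serre spectral sequence of $G/T \to BT \to BG$ collapses to give $H^*(BT;\bQ) \cong H^*(BG;\bQ) \otimes H^*(G/T;\bQ)$ as $H^*(BG;\bQ)$-modules, and combined with Borel's identification $H^*(BT;\bQ)^W = H^*(BG;\bQ)$ this forces $H^*(G/T;\bQ)^W = \bQ$ in degree $0$, hence $H^*(G/N(T);\bQ) = \bQ$. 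Consequently $p^*$ is a rational isomorphism and $\chi(G/N(T)) = 1$. The Becker--Gottlieb identity $\trf_p^* \circ p^* = \chi(G/N(T)) \cdot \id$ together with the invertibility of $p^*$ forces $\trf_p^* = (p^*)^{-1}$, and in particular $p^* \circ \trf_p^* = \id$.

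Combining the two pieces, for any $F \in H^*(BT;\bC)$ one obtains
\[
f^*\bigl(\trf_f^*(F)\bigr) \;=\; \pi^* p^* \trf_p^* \trf_\pi^*(F) \;=\; \pi^* \trf_\pi^*(F) \;=\; \sum_{w \in W} w^* F.
\]
Under the Chern--Weil identifications $H^*(BT;\bC) \cong \Sym^*(\ft\dual_{\bC})$ and $H^*(BG;\bC) \cong \Sym^*(\ft\dual_{\bC})^W$, with $f^*$ corresponding to the natural inclusion, the right-hand side is manifestly $W$-invariant and lies in $\Sym^*(\ft\dual_{\bC})^W$. Hence $\trf_f^*(F)$ is identified with $\sum_{w \in W} w^* F$ in that ring, which is the assertion of the lemma.

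I expect the most delicate ingredient to be the rational acyclicity of $G/N(T)$; everything else is formal bookkeeping, once one accepts the functoriality of the Becker--Gottlieb transfer and its two elementary normalisations (the sum-over-sheets formula for finite covers, and the Euler-characteristic compatibility with the pullback).
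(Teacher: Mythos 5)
Your proof is correct, but it takes a genuinely different route from the paper. The paper's argument is short and representation-theoretic: it observes that the right $W$-action on $G/T$ commutes with the left $G$-action, giving a fibre-preserving $W$-action on $E(G;G/T)\to BG$ with $f\circ w=f$, hence $\trf_f^*=\trf_f^*\circ w^*$; combined with $\trf_f^* f^*=\chi(G/T)\,\id=|W|\,\id$, the isotypical decomposition of $\Sym^*(\ft\dual_{\bC})$ over $\bC[W]$ forces $\trf_f^*$ to vanish on the non-trivial isotypical part and to agree with $\sum_w w^*$ on the invariants, hence everywhere. You instead factor $f$ through $BN(T)$, using transitivity of the transfer, the classical sum-over-sheets formula for the Galois cover $\pi$, and the rational acyclicity of $G/N(T)$ to trivialize $\trf_p^*$. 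Both are sound. Your route has the virtue of making the averaging formula literally geometric (a sum over sheets), but it pays for that with the auxiliary input that $H^*(G/N(T);\bQ)=\bQ$ -- which is a classical consequence of $H^*(G/T;\bQ)$ being the regular $W$-representation (your derivation via the Serre spectral sequence is fine, provided you note that the decomposition $H^*(BT)\cong H^*(BG)\otimes H^*(G/T)$ is $W$-equivariant with $W$ acting trivially on the first factor). The paper's proof avoids this extra input entirely, replacing it with a one-line Schur-type argument, which is why it is the more economical choice in context; but your decomposition through $BN(T)$ is the more ``hands-on'' and arguably more illuminating explanation of where the averaging comes from.
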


\begin{proof}
The left $G$-action on $G/T$ commutes with the right-action of $W$.
Therefore there is a fibre-preserving right-action of $W$ on the
bundle $E(G; G/T) \to BG$. The total space $E(G;G/T)$ is homotopy
equivalent to $BT$ and the homotopy equivalence is $W$-equivariant.
In particular, $f \circ w = f$ for all $w \in W$. Therefore
$\trf_{f}^{*} = \trf_{f \circ w}^{*} = \trf_{f}^{*} \circ w^*$. In
other words, the transfer is $W$-equivariant when considered as a
map $\Sym^* (\ft\dual_{\bC}) \to \Sym^* (\ft\dual_{\bC})^W$. The
composition
\[
\Sym^* (\ft\dual_{\bC})^W \stackrel{f^*}{\to} \Sym^*
(\ft\dual_{\bC}) \stackrel{trf_{f}^{*}}{\to} \Sym^*
(\ft\dual_{\bC})^W
\]
is the map $\trf_{f}^{*} f^*$, which is $\chi (G/T) \id = |W| \id$.
The lemma follows from these two facts by elementary representation
theory of finite groups.
\end{proof}

\begin{proof}[Proof of Theorem \ref {hirzebruch1}]
Consider the diagram

\[
\xymatrix{
  & BT \ar[d]^{g}\\
BSO(4) & BS(U(1) \times U(2)) \ar[d]^{q} \ar[l]_-{h}\\
  & BSU(3), }
\]

where $h$ is the classifying map of the vertical tangent bundle and
$T$ is the standard maximal torus of $SU(3)$ (the group of diagonal
matrices of determinant $1$). Abbreviate $f:= q \circ g$. The
strategy of the proof is to show first that the kernel of

\begin{equation}\label{composition}
H^{4d+4} (BSO(4)) \stackrel{h^*}{\to} H^{4d+4} (BS(U(1) \times
U(2))) \stackrel{q_{!}}{\to} H^{4d} (BSU(3))
\end{equation}

is $2$-dimensional if $d \geq 1$. The second step will be that the
intersection of the kernel of \ref{composition} with $\Pont^{4d+4}
(4)$ has dimension $1$, generated by $\cL_{4d+4}$, which shows the
theorem.

Clearly $\dim H^{4d+4} (BSO(4)) = d+2$; we will show that the image
of the composition in \ref{composition} has dimension $d$. Let $x
\in H^{4d+4} (BSO(4))$. Write $x= C_1 (p_4,p_8) + \chi C_2
(p_4,p_8)$. Write $\cL_{4d} = a_d p_{4}^{d} + p_8 A(p_4,p_8)$ for
$a_d = 2  \frac{2^{2d} B_{2d}}{(2d)!} \neq 0\in \bQ$ and a certain
polynomial $A$. It follows that one can write $C_1 (p_4,p_8) = a
\cL_{4d} + p_8 C_3 (p_4,p_8) = a \cL_{4d} + \chi^2 C_3 (p_4,p_8)$.
In other words

\[
x = a \cL_{4d+4} + \chi ( \chi C_3 (p_4,p_8) + C_2 (p_4, p_8)) =: a
\cL_{4d+4} + \chi F(\chi,p_4)
\]

for a certain polynomial. This expression is uniquely determined.
Next we express $f_{!} (h^* (x))$ as

\[
f_{!} (h^* (x)) = f_{!} (a h^* \cL_{4d}) + f_{!} (h^* \chi h^*
F(\chi,p_4) ) = 0 + \trf_{f}^{*} (F (\chi,p_4))
\]

by Proposition \ref{linkernel} and \ref{deftransfer}. Therefore, the
image of \ref{composition} agrees with the image of the composition

\begin{equation}
H^{4d} (BSO(4)) \stackrel{h^*}{\to} H^{4d} (BS(U(1) \times U(2)))
\stackrel{\trf_{q}^{*}}{\to} H^{4d} (BSU(3))
\end{equation}

which is the same as image of

\begin{equation}\label{composition2}
H^{4d} (BSO(4)) \stackrel{(hg)^*}{\to} H^{4d} (BS(U(1) \times U(2)))
\stackrel{\trf_{f}^{*}}{\to} H^{4d} (BSU(3))
\end{equation}

because $\trf_{qg}= \trf_g \circ \trf_q$ and because $\trf_{g}^{*}
g^*$ is the multiplication with the Euler number of the fibre of
$g$, which is $2$ since $g$ is an $\bS^2$-bundle.

We write the complexified Lie algebra of $T$ as
$\ft=\{(x_1,x_2,x_3 \in \bC^3 | x_1 + x_2 + x_3 =0\}$.
The Weyl group is $\Sigma_3$, acting by the permutation representation.
We write $x_1, x_2, x_3$ for the coordinate functions on $\ft$.

Under the map $h \circ g:BT \to BSO(4)$, the elements $\chi$ and
$p_1$ are mapped by

\begin{equation}\label{mappchi}
\chi \mapsto (x_2 - x_1)(x_3 - x_1); \; p_4 \mapsto (x_2 - x_1)^2 +
(x_3 - x_1)^2;
\end{equation}

the reason is the isomorphism \ref{tangentprojective} or the
equivalent expression \ref{character}. These elements lie in the
$3$-dimensional space $V:=\Sym^2 (\ft\dual_{\bC})$ on which we now
introduce the basis

\[
z_1 = (x_2-x_1)(x_3-x_1), \; z_2 = (x_1-x_2)(x_3-x_2) , \; z_3 =
(x_2-x_3)(x_1-x_3);
\]

the Weyl group acts by permutations on that basis. Rewriting
\ref{mappchi} yields

\begin{equation}\label{mappchi2}
\chi \mapsto z_1 ; p_4 \mapsto 2z_1 + z_2 + z_3 = :z_1 + s_1;
\end{equation}

where $s_i$ denotes the $W$-invariant element $s_i:= z_{1}^{i} +
z_{2}^{i} + z_{3}^{i} $.

In view of \ref{transferchernweil}, we have to show that the image
of the $(d+1)$-dimensional subspace $X:= \spann \{ z_{1}^{k} (z_1 +
s_1)^{d-k} \}_{k=0, \ldots , d}$ of $\Sym^k V$ under the averaging
operator $\Phi= \sum_{\sigma \in \Sigma_3} \sigma$ has dimension
$d$. To this end, abbreviate $v_{k,d}= z_{1}^{k} (z_1 + s_1)^{d-k}$
and note that

\[
\Phi (v_{k,d}) = 2 \sum_{j} \binom{d-k}{j} s_{1}^{j} s_{d-j}.
\]

Let $C$ be the $(d+1) \times (d+1)$-matrix with entries $c_{j,k}=
\binom{d-k}{j}$ ($0 \leq j,k \leq d$); $C$ is nonsingular because
its entries below the antidiagonal are zero and the entries on the
antidiagonal are $1$, therefore $\det (C) = \pm 1$. Therefore the
equation

\[
\Phi (\sum_k a_k v_{k,d}) = s_{1}^{j} s_{d-j}
\]

has a solution $(a_k)$ in $\bC^{d+1}$. Therefore, the image of $X$
under $\Phi$ contains the elements

\[s_{1}^{d}, s_{1}^{d-1} s_1, s_{1}^{d-2} s_2, \ldots s_1 s_{d-1},
s_d.
\]

We claim that these polynomials span an $d$-dimensional vector space
and show this claim by induction on $d$. The case $d=2$ is trivial.

Because the multiplication by $s_1$ is injective, it suffices to
show that $s_d$ is not a linear combination of $s_{1}^{d},
s_{1}^{d-1} s_1, s_{1}^{d-2} s_2, \ldots s_1 s_{d-1}$. Assume, to
the contrary that

\[
s_d (z_1,z_2,z_3) = \sum_{j=0}^{d-1} c_j s_j(z_1,z_2,z_3)
s_{1}^{d-j} (z_1,z_2,z_3), \; c_j \in \bC.
\]

Restricting to the subspace defined by $z_1 + z_2 + z_3=0$, we get
the equation

\[
z_{1}^{d} + z_{2}^{d} + (-z_{3} - z_{2})^{d} = \sum_{j=0}^{d-1} c_j
s_j(z_1,z_2,z_3) s(z_1+z_2+z_3)^{d-j}=0
\]

which is obviously wrong for all $d \geq 2$. This finishes the proof
that \ref{composition} has a $2$-dimensional kernel.

One element in this kernel is $\cL_{4d+4}$. Another element in $(p_4
- \chi)^{d+1}$. To see this, look at \ref{mappchi2}: $g^* h^*(p_4 -
\chi)=s_1 \in V^{\Sigma_3}=\im f^*$. Since $g^*$ is injective, it
follows that $h^* (p_4 - \chi) = q^* y$ for a certain $y$. It
follows that

\[
q_{!} (h^* (p_4 - \chi)^{d+1}) = q_0{!} (q^* y^{d+1} 1) = y^{d+1}
q_{!}(1) =0.
\]

This means that any element in the kernel of \ref{composition} can
be written as $a_1 \cL_{4d+4} + a_2 (p_4 - \chi)^{d+1}$. This
belongs to $\Pont^{4d+4} (4)$ if and only if $a_2=0$.
\end{proof}

\section{From linear to algebraic
independence}\label{lineartoalgebraicsection}

In this section, we show Theorem \ref{maintheorem2}, based on
Theorem \ref{maintheorem1} whose proof we just completed. It is this
step where we have to sacrifice the connectedness of the manifolds.
The main step is:

\begin{prop}\label{lineartoalgebraic}
Let $W \subset \sigma^{-n} H^{*}(BSO(n); \bQ)$ be a linear subspace
such that $\kappa^n :W \to H^{*} (\coprod_{M \in \cR} B \Diff^+
(M)_{+}; \bQ)$ is injective. Then the extension $\Lambda
\kappa^{n,0}:\Lambda W \to H^{*} (\coprod_{M \in \cR} B \Diff^+
(M)_{+}; \bQ)$ is injective.
\end{prop}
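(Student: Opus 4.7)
The approach is to exploit the commutative topological monoid structure on $\cB_n$ given by disjoint union of manifolds (with unit the empty manifold). Writing $\mu: \cB_n\times\cB_n\to\cB_n$ for this product, the induced coproduct $\mu^*$ turns $H^*(\cB_n;\bQ)$ into a graded-commutative Hopf $\bQ$-algebra. The first step is to observe that every class $\kappa^n(c)$ with $c\in W$ is \emph{primitive}: pulling the universal bundle back along $\mu$ yields the disjoint union of its two pullbacks along the projections $\proj_i$, and additivity of the Gysin map under disjoint union of manifolds gives
\[
\mu^{*}\kappa^n(c) \;=\; \kappa^n(c)\otimes 1 \;+\; 1\otimes \kappa^n(c).
\]

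Now $\Lambda W$ is itself tautologically a free graded-commutative Hopf $\bQ$-algebra with $W$ as primitive subspace, and since the algebra extension $\Lambda\kappa^n : \Lambda W \to H^*(\cB_n;\bQ)$ sends primitive generators to primitive elements, it is automatically a map of Hopf algebras. Its kernel is therefore a Hopf ideal in $\Lambda W$. The key algebraic input is then the following consequence of the Milnor--Moore theorem in characteristic zero: in a connected graded-commutative Hopf $\bQ$-algebra $\Lambda V$ freely generated by a positively-graded primitive subspace $V$, every Hopf ideal is the ordinary ideal generated by its intersection with $V$. Applied here, the injectivity hypothesis $\ker(\kappa^n|_W)=0$ forces
\[
\ker(\Lambda\kappa^n) \;=\; \bigl(\ker(\Lambda\kappa^n)\cap W\bigr)\cdot\Lambda W \;=\; \ker(\kappa^n|_W)\cdot\Lambda W \;=\; 0,
\]
which is the proposition.

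The Barratt--Priddy--Quillen theorem enters as background for this Hopf-algebraic set-up: combined with the wreath-product decomposition $B\Diff^+\bigl(\bigsqcup_j M_j^{k_j}\bigr)\simeq\prod_j (B\Diff^+(M_j))^{k_j}_{h\Sigma_{k_j}}$, it presents $\cB_n$ as the free topological commutative monoid on $\cB_n^0$, and the rational transfer for $\Sigma_k$-actions identifies $H^*(\cB_n;\bQ)$ itself as a free graded-commutative algebra, confirming concretely that the Milnor--Moore framework applies in our situation. The main obstacle is the Hopf-algebraic lemma on Hopf ideals; its verification relies on the fact that the binomial coefficients occurring in $\Delta(v^k)=\sum_j\binom{k}{j}\,v^j\otimes v^{k-j}$ are invertible in $\bQ$, which forces a Hopf ideal that meets a decomposable element to already meet its primitive factors. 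Everything else --- additivity of the Gysin map and the universal-property bookkeeping around $\Lambda$ --- is essentially formal.
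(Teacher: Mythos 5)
Your argument is correct and follows a genuinely different route from the paper. The paper first reduces to a bundle $E \to B$ over a connected product base $B = \prod_i B\Diff^+(M_i)$ detecting $W$, then passes to the symmetric-group construction $E(\Sigma_m; B^m)$ and invokes the Barratt--Priddy--Quillen theorem together with homological stability for $\Sigma_m$ (Nakaoka) to conclude that the induced algebra map $\Lambda\tilde{H}^*(B)\to H^*(E(\Sigma_m;B^m))$ is an isomorphism in a range growing with $m$. You instead exploit the commutative $H$-space structure on $\cB_n$ coming from disjoint union: the only topological input is primitivity of the classes $\kappa^n(c)$ (additivity of the Gysin map over disjoint unions of fibres), and the rest is the purely algebraic fact that a bi-ideal of $\Lambda W$ meeting the primitive subspace $W$ trivially is zero. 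This avoids BPQ and stability entirely, and the construction step with $E(\Sigma_m; B^m)$ becomes unnecessary. In effect the paper works "by hand" with a free $E_\infty$-object while you extract exactly the formal consequence that is needed.

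Two points to tighten in the write-up. First, $H^*(\cB_n;\bQ)$ should not be called a connected Hopf algebra: $\cB_n$ has one path-component per diffeomorphism class, so $H^0$ is huge, and since the components need not be of finite rational type the K\"unneth map need not be an isomorphism. What your argument actually needs is only that $\ker(\Lambda\kappa^n)$ is a bi-ideal of $\Lambda W$, and this follows without any structural claim on the target: the square comparing $\Delta$ on $\Lambda W$ with $\mu^*$ on $H^*(\cB_n)$ --- bottom map $\Lambda\kappa^n\otimes\Lambda\kappa^n$ followed by the K\"unneth cross product --- commutes because both composites are algebra maps agreeing on the generating set $W$, and the cross product is always injective over a field. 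Second, the bi-ideal lemma is the heart of the proof and deserves a full argument rather than the gesture toward binomial coefficients: after quotienting by the ideal generated by $\ker(\Lambda\kappa^n)\cap W$ one reduces to showing a bi-ideal $I\subset\Lambda W$ with $I\cap W=0$ is zero; a nonzero element $x\in I$ of minimal degree has reduced coproduct $\bar\Delta(x)\in I\otimes\Lambda W+\Lambda W\otimes I$, which vanishes in all bidegrees by minimality, so $x$ is primitive; and over $\bQ$ the primitives of $\Lambda W$ are exactly $W$, a contradiction. Your closing remark that BPQ "confirms" the Hopf-algebraic setup is a red herring: as you have structured the proof, BPQ plays no role.
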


Assuming Proposition \ref{lineartoalgebraic} for the moment, we can
show Theorem \ref{maintheorem2}.

\begin{proof}[Proof of Theorem \ref{maintheorem2}:]
If $n$ is even, then Theorem \ref{maintheorem2} is an immediate
consequence of Theorem \ref{maintheorem1} and Proposition
\ref{lineartoalgebraic}.

If $n$ is odd, we need a little argument. If $W \subset V$ are
graded vector spaces, then $\Lambda (V / W) \cong \Lambda (V) /(W)$,
where $(W)$ is the $2$-sided ideal generated by $W$. Let $V:=
\sigma^{-n} H^* (BSO(n))$ and let $W$ be the span of the Hirzebruch
$\cL$-classes. Choose a complement $U \subset V$ of $W$. By Theorem
\ref{maintheorem1}, $\kappa^{n,0} : U \to H^* (\coprod_{M \in \cR_n}
B \Diff^+ (M))$ is injective; whence $\Lambda (U) \to H^*
(\coprod_{M \in \cR_n} B \Diff^+ (M))$ is injective by Proposition
\ref{lineartoalgebraic}. But $\Lambda( U) \cong \Lambda (V)/ (W)$
and therefore the statement follows.
\end{proof}

\begin{proof}[Proof of Proposition \ref{lineartoalgebraic}]
Without loss of generality, we can assume that $W$ is
finite-dimensional.

There exist connected $n$-manifolds $M_1, \ldots M_r$ such that
$\kappa^{n}: W \to H^{*} (\cB_{n}^{0}) \to H^{*} (\coprod_{i=1}^{r}
B \Diff^+ (M_i))$ is injective. Put $M:= \coprod_{i=1}^{r } M_i$.
The group $\prod_{i=1}^{r} \Diff^+ (M_i)$ acts on $M$ separately on
each factor. Thus there is a smooth $M$-bundle $E \to B=
\prod_{i=1}^{r} B \Diff^+ (M_i)$. The diagram

\[
\xymatrix{ W   \ar[d]^{\kappa_E} \ar[r]^{\kappa^{n,0}} & H^* (\cB_{n}^{0}) \ar[d]\\
H^* (\prod_{i=1}^{r} B \Diff^+ (M_i)) \ar[r] &  H^*
(\coprod_{i=1}^{r} B \Diff^+ (M_i))}
\]

(the bottom map comes from the natural map) commutes and therefore
$\kappa_E: W \to H^* (B)$ is injective. The purpose of this argument
is to show that we can find a single manifold $M$ and a smooth
$M$-bundle $f:E \to B$ on a connected base space such that
$\kappa_E: W \to H^* (B)$ is injective.

Let $m \in \bN$ and let $\Sigma_m$ be the symmetric group. Now we
consider

\begin{equation}
\xymatrix{
E \dash \ar[r]^{p \dash} \ar[d]^{f \dash} & E \ar[d]^{f}\\
E(\Sigma_m; \underline{m} \times B^m) \ar[r]^-{p} \ar[d]^{q } & B \\
E(\Sigma_m; B^m); & \\
}
\end{equation}

the map $p$ is given by the $\Sigma_m$-equivariant map
$\underline{m} \times \ni (i,x_1,\ldots , x_m) \mapsto x_i \in B$;
the square is a pullback and the composition $q \circ f \dash$ is a
smooth $\underline{m} \times M$-bundle (note the similarity to the
loop space construction).

In the same way as in \ref{mmmclassesofloopspace2}, one sees that
the diagram

\begin{equation}\label{diagram1}
\xymatrix{ W \ar[r]^{\kappa_E} \ar[dr]^{\kappa_{E \dash}} & H^* (B)
\ar[d]^{q_{!} \circ p^*}\\
 & H^* (E( \Sigma_m; B^m)) }
\end{equation}

commutes. Hence the induced diagram

\begin{equation}\label{diagram2}
\xymatrix{\Lambda W \ar[r]^{\Lambda \kappa_E} \ar[dr]^{\Lambda
\kappa_{E \dash}} & \Lambda H^* (B)
\ar[d]^{\Lambda(q_{!} \circ p^*)}\\
 & H^* (E( \Sigma_m; B^m)) }
\end{equation}

commutes as well. The top horizontal map is injective by assumption.

The map $q_{!} \circ p^* : H^* (B; \bQ) \to H^* (E( \Sigma_m; B^m);
\bQ)$ induces an algebra map $\Lambda \tilde{H}^* (B) \to H^* (E(
\Sigma_m; B^m); \bQ)$ which is an isomorphism up to degree $m/2$.
This is a combination of the Barratt-Priddy-Quillen theorem and
homological stability for symmetric groups (Nakaoka et alii). See
\cite{EG} for details and references.
\end{proof}

It is obvious that it is necessary to consider nonconnected manifolds in the above proof of Theorem \ref{mainthm2}.
We do not know whether Theorem \ref{maintheorem2} remains true if
$\Lambda \kappa^n$ is replaced by $\Lambda \kappa^{n,0}$. In the
$2$-dimensional case, the situation is different. All published
proofs of Theorem \ref{morita} show that $ \Lambda \kappa^{n,0}$ is
injective. For the passage from $\kappa^{2,0}$ to $\Lambda
\kappa^{2,0}$, the use of Harer's homological stability theorem for
the mapping class groups is essential, while the stability result is
not necessary to show that $\kappa^{2,0}$ is injective (this point
is most obvious in Miller's proof \cite{Mil}). Since a large portion
of the proof of Theorem \ref{maintheorem1} relies on the
$2$-dimensional case, there are partial results for $\Lambda
\kappa^{n,0}$, see e.g. \cite{Gian} for a result in the
$4$-dimensional case.

\section{The holomorphic case}\label{holomorphic}

In this section, we prove Theorem \ref{holomorphiccase}, which is
parallel to the proofs of \ref{maintheorem1} and \ref{maintheorem2}.
So we sketch only the differences.

The proofs of \ref{morita} given by Miller and Morita show that
Theorem \ref{holomorphiccase} holds if $m=1$. The inductive
procedure works in the same way; the proof of \ref{inductionstep} is
easily adjusted and shows that only the classes of the form
$\kappa_E (\ch_{2d})$, $2d \geq 2m$ cannot be detected on products.

If $q: E \to BU(m+1)$ is the universal $\cp^m  $-bundle, then the
class $q_{!} (\ch_{2d} (T_v E))$ is nonzero if $2d \geq 2m$ and $d-m
\equiv 0 \pmod 2$ by Theorem \ref{hirzebruch1}. Of course, $BU(m)$
is not a complex manifolds; nevertheless it can be approximated by
the Grassmann manifolds $\Gr_m (\bC^r)$ of $m$-dimensional quotients
of $\bC^r$ for $r \gg m$, which is a projective variety. The
tautological vector bundle on $\Gr_m (\bC^r)$ is a holomorphic
vector bundle and hence its projectivization is a holomorphic fibre
bundle.

Thus we are left with showing that $\kappa^{m}_{\bC}(\ch_{2d}) \neq 0$ if $2m \leq 2d$ and $d-m \equiv 1 \pmod 2$. The loop space
construction as in section \ref{loopspacesection} does not make
sense in the holomorphic realm. One could replace $\bS^1$ by $\cp^1$
in the loop space construction and the space $\map (\bS^1,BU(m+1))$
by the approximating space $\hol_k (\cp^1; \Gr_{m+1}(\bC^r)$ and
then use the fact (proven by Segal and Kirwan) that the space of
holomorphic maps into a Grassmannian is a good homotopical
approximation to the space of all maps, bu we prefer a more direct
route. Let $T \to \cp^1$ and $L \to
\cp^r$ be the tautological line bundles. Consider the $2$-dimensional vector bundle $V=(\bC \oplus T)
\boxtimes L \to \cp^1 \times \cp^r$. Its total Chern class is $c(V)=
(1 + 1 \times x)(1+ z \times 1+ 1 \times x)$, where $x \in H^2
(\cp^r )$ and $z \in H^2(\cp^1)$ are the usual generators. Therefore
the second Chern class is $ u=1 \times x^2 + z \times x$ and $u^l =1
\times x ^{2l}+ l z \times x^{2l-1} \neq 0$ for $r \gg 2l$. Consider
the composite bundle

\[
\bP (V \oplus \bC^{m-2}) \stackrel{q}{\to} \cp^1 \times \cp^r
\stackrel{\proj}{\to} \cp^r
\]

with fibre $\cp^1 \times \cp^{m-1}$. A computation similar to the one in \ref{mmmclassesofloopspace1} (and using Proposition \ref{Gysinproperties2}, (4)) shows that

\[
\proj_{!} q_{!} (\ch (T^{proj \circ q})) = \proj_{!} q_{!} (\ch (T^q)) + \proj_{!} (q_{!} q^* \ch (T^{proj})) = \proj_{!} q_{!} (\ch (T^q)).
\]

By Theorem \ref{twoplustrivial} and Lemma
\ref{ignorefirstchern}

\[
\proj_{!} q_{!} (\ch (T^q)) = \proj_{!} (\sum_{l=0}^{\infty} a_l \proj_{!} u^l),
\]

where $a_l$ is the nonzero rational number from Theorem \ref{twoplustrivial}. But $\proj_{!} (u^l)= l x^{2l-1}$. This
finishes the proof that $\kappa_{E}{\bC}(\ch_{2d}) \neq 0$ for a
certain $m$-dimensional bundle with $m - d$ odd.

To show the second half of Theorem \ref{holomorphic}, we replace the
space $E \Sigma_m$ by the configuration space $C^m ( \bC^r)$ of $m$
numbered points in $\bC^r$ for sufficiently large $r$.

\appendix

\section{Gysin maps and the transfer}\label{gysintransfersection}

Here we give a brief recapitulation of Gysin maps for fibre bundles.
The Gysin homomorphism of a smooth closed oriented manifold bundle
is defined by means of the Leray-Serre spectral sequence, see e.g.
\cite{MorGCC}, p. 147 ff. Let $E^{p,q}_{r}$ be the Leray-Serre
spectral sequence. The Gysin map $f_{!}$ is defined as the
composition

\begin{equation}\label{gysinlerayserre}
f_{!}:H^{k+n} (E) \to E_{\infty}^{k,n} \subset E_{2}^{k,n} = H^{k}
(B; \underline{H^n (M)}) \stackrel{\cap [M]}{\to} H^{k} (B).
\end{equation}

The last map arises as follows. Because the bundle is oriented, the
fundamental class $[M]$ of the fibre defines a homomorphism
$\underline{H^n (M)} \to \bZ$ of coefficient systems on $B$ (the
system $\bZ$ is the constant one); it is always an epimorphism and
it is an isomorphism if $M$ is connected. One can replace $\bZ$ of
course by any other ground ring. Below there is a list of the main properties of the Gysin map. The proof can be found in \cite{BH}, section 8.

\begin{prop}\label{gysinproperties1}
Let $M$ be a closed oriented $n$-manifold and $f:E \to B$ be smooth
oriented $M$-bundle.
\begin{enumerate}
\item \emph{Naturality:} If
\[
\xymatrix{ E \dash \ar[r]^{\hat{g} } \ar[d]^{f \dash} & E
\ar[d]^{f}\\
B \dash \ar[r]^{g} & B }
\]
is a pullback-square, then $f_{!} \dash \circ \hat{g}^{*} = g^*
\circ f_{!}$.
\item \emph{$H^*(B)$-linearity:} If $x \in H^* (E)$ and $y \in H^* (B)$, then $f_{!} ( (f^*y ) x)
= y  \times f_{!} (x)$.
\item \emph{Normalization:} If $M$ is an oriented $n$-manifold with fundamental class
$[M] \in H_n (M)$ and $f: M \to *$ the constant map, then $f_{!} (x) = \langle x;[M] \rangle 1$ for all $ x \in H^* (M)$.
\item \emph{Transitivity:} If $N$ is another closed oriented manifold
and $g: X \to E$ be a smooth oriented $N$-bundle, then $(f \circ
g)_! = f_! \circ g_!$.
\end{enumerate}
\end{prop}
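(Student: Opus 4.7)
The plan is to derive all four properties directly from the spectral-sequence definition \eqref{gysinlerayserre}, exploiting naturality and multiplicativity of the Leray--Serre spectral sequence together with the cap-product action of $[M]$ on the fibre cohomology local system $\underline{H^n(M)}$.

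For naturality, I would begin by recalling that a pullback square of fibre bundles induces a morphism of Leray--Serre spectral sequences: on the $E_2$-page this is the map $H^k(B;\underline{H^n(M)}) \to H^k(B';\underline{H^n(M)})$ induced by $g^*$ and the identity on the fibre coefficient system, and the cap product with $[M]$ commutes with it since the bundle morphism preserves the orientation class of the fibre. Combining the three factors in \eqref{gysinlerayserre} with $\hat{g}^{*}$ on the domain gives the desired identity $f'_! \circ \hat g^{*} = g^{*} \circ f_!$. For $H^*(B)$-linearity, the relevant fact is that the Leray--Serre spectral sequence is a spectral sequence of algebras and that the edge homomorphism $H^*(B) \to H^*(E)$ coincides with $f^*$. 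Thus multiplication by $f^*y$ on $H^*(E)$ corresponds on the $E_\infty$-page to multiplication by $y \in E_\infty^{*,0}$, which in turn, under the cap product with $[M]$, becomes multiplication by $y$ on $H^*(B)$.

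For normalization I would simply specialize to $B = \mathrm{pt}$, where the spectral sequence is concentrated in a single column and the composition \eqref{gysinlerayserre} collapses to the evaluation $x \mapsto \langle x, [M]\rangle$ followed by the identification $\mathbb{Q} = H^0(\mathrm{pt})$. Transitivity is the delicate step, and I expect it to be the main obstacle: given a tower $X \xrightarrow{g} E \xrightarrow{f} B$ with fibres $N$ and $M$, one must compare the Leray--Serre spectral sequence of $f \circ g$ (with fibre a total space of an $N$-bundle over $M$) to the iterated construction using the spectral sequences of $f$ and of $g$. The cleanest route is via a Serre spectral sequence of a spectral sequence, or equivalently the filtration of $X$ by preimages of the skeleton of $B$ refined by the filtration coming from the skeleton of $M$; one then checks that the orientation class $[X_{\mathrm{fibre}}] = [N] \times [M]$ is compatible with the two successive cap products.

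Once transitivity is set up, the three previous properties combine with it essentially formally to check consistency (for instance, $H^*(B)$-linearity and normalization applied to $g: X \to E$ and $f: E \to B$ reproduce the projection formula on products, which is a useful consistency check). Throughout, I would follow the exposition of \cite{BH}, section~8, only filling in the minor verifications that the orientation class of the vertical tangent bundle is preserved under the various pullbacks and compositions, since this is what guarantees that the map $\underline{H^n(M)} \to \mathbb{Q}$ is natural in all the relevant squares.
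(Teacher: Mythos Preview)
The paper does not actually prove this proposition: it simply states the properties and says ``The proof can be found in \cite{BH}, section 8.'' Your proposal goes further by sketching how each property follows from the spectral-sequence definition \eqref{gysinlerayserre}, and you even cite the same reference at the end; so your approach is consistent with the paper's treatment, only more explicit. The sketches you give for naturality, $H^*(B)$-linearity, and normalization are correct, and your remark that transitivity is the delicate step (requiring a comparison of iterated Serre filtrations and compatibility of orientation classes) is accurate, though of course this is exactly the kind of detail the paper defers to \cite{BH}.
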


The following properties are straightforward consequences of Proposition \ref{gysinproperties1}.

\begin{prop}\label{gysinproperties2}
Let $f: E \to B$ be an oriented smooth $n$-manifold bundle.
\begin{enumerate}
\item  Then $f_{!} (x f^* (y))= (-1)^{|x||y|} f_{!} ( f^*
(y)x) =   (-1)^{(|x|-n)|y| }   f_{!} (x) y$.
\item Let $f_i:E_i \to B_i$, $i=1,2$, be two oriented fibre bundles of
fibre dimension $n_i$. Consider the oriented fibre bundle $f = f_1
\times f_2: E = E_1 \times E_2 \to B = B_1 \times B_2$ of fibre
dimension $n= n_1 + n_2$. Then $(f_1 \times f_2)_{!} (x_1 \times
x_2) = (-1)^{n_2 |x|}(f_1)_{!} (x_1) \times (f_2)_{!} (x_2)$ for all
$x_i \in H^* (E_i)$.
\item If $f: E \to B$ is a homeomorphism (the fibre is a point),
then $f_{!} = (f^{-1})^*$.
\item If the fibres of $f$ have positive dimension, then $f_{!} \circ f^* =0$.
\end{enumerate}
\end{prop}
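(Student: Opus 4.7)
The plan is to derive each of the four statements directly from Proposition \ref{gysinproperties1} by combining naturality, $H^*(B)$-linearity, transitivity, and normalization. Statement (1) is almost formal: the first equality is just graded commutativity of the cup product, and for the second I apply $H^*(B)$-linearity (Proposition \ref{gysinproperties1}(2)) to the rewritten form $f^*(y)\cdot x$, then commute $y$ past $f_{!}(x)$, paying attention that $|f_{!}(x)|=|x|-n$, which produces exactly the sign $(-1)^{(|x|-n)|y|}$.

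For statement (2) I would factor $f_1\times f_2$ as the composite
\[
E_1\times E_2 \xrightarrow{\id_{E_1}\times f_2} E_1\times B_2 \xrightarrow{f_1\times \id_{B_2}} B_1\times B_2
\]
and use transitivity (Proposition \ref{gysinproperties1}(4)) to reduce to computing each piece. Each factor is a pullback of a simpler bundle along a projection, so naturality (Proposition \ref{gysinproperties1}(1)) converts a pulled-back class into a cross product with $1$, and then $H^*(B)$-linearity (via part (1) that I already established) extracts the remaining factor. Composing the two computations and collecting the signs from the various graded commutations gives the product formula. The principal bookkeeping nuisance is keeping track of the signs when moving factors past $p_2^*$ versus $p_1^*$ and past classes of shifted degree; I expect this to be the only genuinely fiddly step.

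Statement (3) follows by first showing $f_{!}(1)=1$ when $f$ is a homeomorphism. I would pull back along an arbitrary point $b\colon * \to B$: the pullback square has $*\xrightarrow{=} *$ on top and $b$ on the bottom, so naturality combined with the normalization axiom (Proposition \ref{gysinproperties1}(3)) shows $b^* f_{!}(1)=1$ for every $b$, hence $f_{!}(1)=1$. Then for any $x\in H^*(E)$, writing $x=f^*((f^{-1})^* x)$ and using $H^*(B)$-linearity gives $f_{!}(x)=(f^{-1})^* x\cdot f_{!}(1)=(f^{-1})^* x$.

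Finally, statement (4) is immediate on degree grounds: $H^*(B)$-linearity gives $f_{!}(f^* y)=y\cdot f_{!}(1)$, and if $\dim M=n>0$ then $f_{!}(1)\in H^{-n}(B)=0$, so $f_{!}\circ f^*=0$. No further input is needed. Altogether, the only real work is the sign calculation in part (2); the rest is a direct reshuffling of the four properties established in Proposition \ref{gysinproperties1}.
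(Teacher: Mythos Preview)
Your proposal is correct and matches the paper's approach: the paper does not give a detailed argument but simply states that these properties are ``straightforward consequences of Proposition \ref{gysinproperties1},'' and what you have written is precisely a careful unpacking of that remark using naturality, $H^*(B)$-linearity, transitivity, and normalization. The only thing worth double-checking is the sign bookkeeping in (1) and (2); your strategy is right, but make sure the accumulated signs actually match the stated formula (the paper's own sign in (1) is easy to mis-track).
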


Another construction of Gysin maps is homotopy-theoretic in nature
and uses the Pontrjagin-Thom construction, see e.g. \cite{BG}. If
$f:E \to B$ is a smooth manifold bundle with closed fibres, then the
Pontrjagin-Thom map is a map $\PT_f: \suspinf B_+ \to \bTh (-T_v E)$
of spectra ($\bTh (-T_v E)$ is the Thom spectrum of the stable
vector bundle $-T_v E$. The Thom isomorphism of $T_v E$ is an
isomorphism $\thom: H^{* } (\suspinf E_+) \cong H^{*-n} (\bTh (-T_v
E))$ and the Gysin map is the composition

\[
f_{!} = \PT_{f}^{*} \circ \thom:H^{k} (E) = H^k (\suspinf E_+) \stackrel{\thom}{\to} H^{k-n} (\bTh (-T_v E)) \stackrel{\PT_{f}^{*}}{\to}
H^{k-n} (\suspinf B_+ ) = H^{n-k} (B).
\]

Closely related to the Gysin map is the \emph{transfer}.

\begin{defn}\label{deftransfer}
Let $f: E \to B$ be an oriented smooth bundle. Then the
\emph{transfer} is the map $\trf_{f}^{*}: H^* (E) \to H^* (B)$ given
by $\trf_{f}^{*} (x) := f_{!} (\chi (T_v E) x)$.
\end{defn}

Note that both $\chi (T_v E)$ and $f_{!}$ reverse their sign if the
orientation of $T_v E$ is reversed, so $\trf_{f}^{*}$ does not
depend on the orientation. In fact, $\trf_{f}^{*}$ is induced by a
stable homotopy class $\trf_{f}: \suspinf B_+ \to \suspinf E_+$
which only depends on the bundle and not on the orientation; in fact this stable homotopy class can be defined for more general bundles than we consider here. We do
not use this homotopy-theoretic perspective in this
paper\footnote{We use it implicitly in the proof of Theorem
\ref{maintheorem2}, though.}. What we need to know are the following properties which
are straightforward consequences of Proposition \ref{gysinproperties1}.

\begin{prop}\label{transferproperties}
Let $f: E \to B$ be an oriented smooth $n$-manifold bundle.
\begin{enumerate}
\item If $g: F \to E$ is another smooth oriented manifold bundle,
then $\trf^{*}_{f \circ g} = \trf^{*}_{f} \circ \trf^{*}_{g}$.
\item The composition $\trf_{f}^{*} f^* : H^* (B) \to H^* (B)$ is
multiplication by the Euler number $\chi (M)$ of the fibre.
\item If $f: E \to B$ is a homeomorphism (the fibre is a point),
then $\trf_{f}^{*} = (f^{-1})^*$.
\end{enumerate}
\end{prop}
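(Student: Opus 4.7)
The plan is to prove each of the three statements directly from the definition $\trf_{f}^{*}(x) = f_{!}(\chi(T_v E)\cdot x)$, using the Gysin-map properties collected in Propositions \ref{gysinproperties1} and \ref{gysinproperties2}.

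For part (1), I will first observe that the vertical tangent bundles along a composition fit into a short exact sequence $0 \to T^{g} \to T^{f \circ g} \to g^{*}T^{f} \to 0$, so by multiplicativity of Euler classes one has $\chi(T^{f \circ g}) = \chi(T^{g}) \cdot g^{*}\chi(T^{f})$. Applying transitivity of the Gysin map (Proposition \ref{gysinproperties1}(4)) and then the $H^{*}(E)$-linearity of $g_{!}$ (Proposition \ref{gysinproperties2}(1)) gives the chain of identities
\[
\trf^{*}_{f \circ g}(x) \;=\; f_{!}\, g_{!}\bigl( g^{*}\chi(T^{f}) \cdot \chi(T^{g}) \cdot x \bigr) \;=\; f_{!}\bigl( \chi(T^{f}) \cdot g_{!}(\chi(T^{g}) \cdot x) \bigr) \;=\; \trf^{*}_{f}\bigl(\trf^{*}_{g}(x)\bigr).
\]
Sign issues arising from graded commutativity when reordering $\chi(T^{g})$ and $g^{*}\chi(T^{f})$ are harmless because rationally $\chi$ of an odd-dimensional oriented bundle vanishes, so at least one of the two factors is zero whenever a nontrivial sign could occur.

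For part (2), the $H^{*}(B)$-linearity of $f_{!}$ gives $\trf_{f}^{*}(f^{*}y) = f_{!}(\chi(T^{f}) \cdot f^{*}y) = y \cdot f_{!}(\chi(T^{f}))$, so it remains to identify the element $f_{!}(\chi(T^{f})) \in H^{0}(B)$ with the constant $\chi(M) \cdot 1$. I would argue this fiberwise: by naturality of $f_{!}$ along the inclusion of a point $\iota : \{b\} \hookrightarrow B$, the element $\iota^{*}f_{!}(\chi(T^{f}))$ is computed as $(f|_{M_{b}})_{!}(\chi(TM_{b}))$, which by the normalization axiom (Proposition \ref{gysinproperties1}(3)) equals $\langle \chi(TM_{b}), [M_{b}]\rangle$, and this in turn equals $\chi(M_{b})$ by the Poincar\'e--Hopf theorem.

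Part (3) is the easiest: a homeomorphism $f : E \to B$ is a manifold bundle with zero-dimensional fiber, so $T_v E$ is the zero vector bundle and $\chi(T_v E) = 1 \in H^{0}(E)$; hence $\trf_{f}^{*}(x) = f_{!}(x) = (f^{-1})^{*}x$ by part (3) of Proposition \ref{gysinproperties2}. The only non-formal input in the whole argument is the Poincar\'e--Hopf identity in part (2); everything else is bookkeeping with the Gysin formalism, so I expect that to be the substantive step, while the sign verification in part (1) is the only mild technicality.
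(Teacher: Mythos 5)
Your proof is correct and fills in exactly what the paper leaves implicit: the paper states Proposition \ref{transferproperties} as a ``straightforward consequence of Proposition \ref{gysinproperties1}'' with no written argument, and your chain of identities is the one the author evidently has in mind. A few small remarks. In part (1) the projection formula you invoke in the unsigned form $g_!\bigl(g^*(y)\,a\bigr) = y\,g_!(a)$ is the one recorded in Proposition \ref{gysinproperties1}(2) rather than \ref{gysinproperties2}(1), but this is immaterial since, as you note, the only potential sign is $(-1)^{n_g\,n_f}$ and when both fibre dimensions are odd both rational Euler classes vanish, so each side of the claimed identity is zero. Your reduction of part (2) to the normalization axiom via restriction to a fibre is precisely the right move; the identity $\langle \chi(TM),[M]\rangle = \chi(M)$ you attribute to Poincar\'e--Hopf is more commonly labelled Gauss--Bonnet--Chern, though the two are of course equivalent, so this is a naming quibble rather than a gap. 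Part (3) is exactly the intended triviality.
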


\address
\email
\end{document}